\numberwithin{equation}{section}
\newcommand{\bN}{\mathbb{N}}
\newcommand{\bQ}{\mathbb{Q}}
\newcommand{\bR}{\mathbb{R}}
\newcommand{\bZ}{\mathbb{Z}}
\newcommand\lra{\longrightarrow}
\newcommand\Diff{\mathrm{Diff}}
\newcommand\Emb{\mathrm{Emb}}
\newcommand\colim{\operatorname*{colim}}
\newcommand\Coker{\operatorname*{Coker}}
\newcommand\Ker{\operatorname*{Ker}}
\newcommand{\gen}{\mathrm{span}}
\newcommand{\Lk}{\mathrm{Lk}}
\newcommand{\R}{\bR}
\newcommand{\Int}{\mathrm{int}}
\newcommand{\IM}{\mathrm{Im}}
\renewcommand{\epsilon}{\varepsilon}
\newcommand{\Gr}{\mathrm{Gr}}
\newcommand{\MM}{\mathscr{M}}
\mathchardef\ordinarycolon\mathcode`\:
\theoremstyle{plain}
\newtheorem{theorem}{Theorem}[section]
\newtheorem{proposition}[theorem]{Proposition}
\newtheorem{lemma}[theorem]{Lemma}
\newtheorem{corollary}[theorem]{Corollary}
\theoremstyle{definition}
\newtheorem{definition}[theorem]{Definition}
\theoremstyle{remark}
\newtheorem{remark}[theorem]{Remark}
\newtheorem*{remark*}{Remark}
\title[Homological stability]{Homological stability for moduli spaces
  of high dimensional manifolds}
\author{S{\o}ren Galatius} 
\thanks{S. Galatius was partially supported
  by NSF grant DMS-1105058 and both authors were supported by ERC
  Advanced Grant No.\ 228082, and the Danish National Research
  Foundation through the Centre for Symmetry and Deformation.}
\email{galatius@stanford.edu}
\address{Department of Mathematics\\
  Stanford University\\
  Stanford CA, 94305}
\author{Oscar Randal-Williams}
\email{o.randal-williams@math.ku.dk}
\address{Institut for Matematiske Fag\\
Universitetsparken 5\\
DK-2100 K{\o}benhavn {\O}\\
Denmark}
\subjclass[2010]{57R90, 57R15, 57R56, 55P47}
\begin{document}
\begin{abstract}
  We prove a homological stability theorem for the moduli spaces of
  manifolds diffeomorphic to $\#^g S^n \times S^n$, provided $n >
  2$. This generalises Harer's stability theorem for the homology of
  mapping class groups. Combined with previous work of the authors, it
  gives a calculation of the homology of these moduli spaces in a
  range of degrees.
\end{abstract}
\maketitle

\section{Introduction and statement of results}

A famous result of Harer (\cite{H}) established \emph{homological
  stability} for mapping class groups of oriented surfaces.  For example, if
$\Gamma_{g,1}$ denotes the group of isotopy classes of diffeomorphisms
of an oriented connected surface of genus $g$ with one boundary
component, then the natural homomorphism $\Gamma_{g,1} \to
\Gamma_{g+1,1}$ induces an isomorphism in group homology
\begin{equation*}
  H_k(\Gamma_{g,1}) \lra H_k(\Gamma_{g+1,1})
\end{equation*}
as long as $g \geq (3k+2)/2$.  (Harer proved this for $g \geq 3k-1$,
but the range was later improved by Ivanov (\cite{Ivanov}) and Boldsen
(\cite{Boldsen}), see also \cite{R-WResolution}.)  This result can be
interpreted in terms of moduli spaces of Riemann surfaces, and has
lead to a wealth of research in topology and algebraic geometry.  We
prove an analogous homological stability result for moduli spaces of
manifolds of higher (even) dimension. The precise result requires the
following definition, where we assume $N \geq 2n$ and embed $S^{2n-1}
\subset \R^{2n} \subset \R^N$ in the usual way.
\begin{definition}
  Let $\MM_g(\R^N) = \MM_g^n(\R^N)$ denote the set of compact
  $2n$-dimensional submanifolds $W \subset [0,\infty) \times \R^N$
  such that $\partial W = \{0\} \times S^{2n-1}$ and $[0,\epsilon)
  \times S^{2n-1} \subset W$ for some $\epsilon > 0$, and such that
  $W$ is diffeomorphic relative to its boundary to the manifold
  $W_{g,1} = \#^g (S^n \times S^n) - \Int(D^{2n})$.  Topologise
  $\MM_g(\R^N)$ as a quotient of the space of embeddings $W_{g,1}
  \hookrightarrow [0,\infty) \times \R^N$ (with fixed behaviour near
  the boundary).

  For $N = \infty$ we write $\MM_g = \colim_{N \to \infty}
  \MM_g(\R^N)$.  Furthermore, we pick once and for all a (collared)
  embedding of the cobordism $W_{1,2} = S^n \times S^n - \Int(D^{2n}
  \amalg D^{2n})$ into $[0,1] \times \R^N$.  For $N \gg n$ all such
  embeddings are isotopic, and induce a well defined homotopy class
  of maps $\MM_g \to \MM_{g+1}$.
\end{definition}
The space $\MM_g$ is a model for the classifying space
$B\Diff^\partial (W_{g,1})$ of the topological group of
diffeomorphisms of $W_{g,1}$ fixing a neighbourhood of the boundary.
For $n = 1$, this is an Eilenberg--MacLane space $K(\Gamma_{g,1},1)$,
and hence Harer's result states that the stabilisation map $\MM_g^1
\to \MM_{g+1}^1$ induces an isomorphism in homology in a range.  Our
main result generalises this to higher $n$ (although we exclude the
case $n=2$).
\begin{theorem}\label{thm:main}
  For $n > 2$ the stabilisation map 
  \begin{equation*}
    H_k(\MM_{g}) \lra H_k(\MM_{g+1})
  \end{equation*}
  is an isomorphism for $k \leq (g-4)/2$.
\end{theorem}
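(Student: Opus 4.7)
The plan is Quillen's semi-simplicial resolution strategy for homological stability, in the form successfully used for mapping class groups of surfaces by Harer, Ivanov, and Randal-Williams. For each $W \in \MM_g$, I would construct a semi-simplicial space $K_\bullet(W)$ whose $p$-simplices parametrise $(p+1)$-tuples of disjoint pieces of ``destabilising data'' in $W$, concretely $(p+1)$ disjoint embeddings of $(S^n \vee S^n) \times D^n$ (with fixed standard boundary behaviour near $\partial W$) whose cores form a partial symplectic basis in the middle homology $H_n(W)$ equipped with its intersection form. Cutting $W$ open along a single such embedded thickened wedge returns an element of $\MM_{g-1}$, and cutting along a $p$-simplex returns an element of $\MM_{g-p-1}$. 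Varying $W$ assembles these into an augmented semi-simplicial space $K_\bullet \to \MM_g$ whose $p$-th space encodes the iterated stabilisation $\MM_{g-p-1} \to \MM_g$.

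The central technical input, and the main obstacle, is a \emph{connectivity estimate}: I need to show that $|K_\bullet(W_{g,1})|$ is at least $\lfloor (g-c)/2\rfloor$-connected for some absolute constant $c$. This, and essentially only this, will use the hypothesis $n > 2$. I would prove it by induction on $g$, following the template of Hatcher and Vogtmann: the proof splits into an algebraic part (the simplicial complex of partial symplectic bases in the intersection lattice $(H_n(W_{g,1}),\cdot)$ is highly connected, which is well-known from the theory of arithmetic groups) and a \emph{geometric realisation} statement (any partial symplectic basis of classes can be realised by disjoint embeddings of $(S^n \vee S^n) \times D^n$). For the latter one starts with transversely intersecting representatives meeting in finitely many points, and removes excess double points in algebraically trivial pairs via the Whitney trick. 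Whitney disks have dimension $n+1$ and lie in a $2n$-manifold, and their generic embeddedness (together with simple-connectedness of the complement) requires $n+1 \geq 4$, that is $n \geq 3$; this is the source of the dimension restriction, the Whitney trick failing notoriously at $n = 2$.

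Granted the connectivity estimate, the conclusion is standard machinery. The augmentation $|K_\bullet| \to \MM_g$ yields a spectral sequence whose $E^1$-terms involve $H_*(K_p)$ and, via the face identifications with the stabilisation, reduce to homologies of $\MM_{g-p-1}$. The connectivity of $|K_\bullet|$ makes the abutment vanish in a range of total degrees, and comparing the spectral sequences for $\MM_g$ and $\MM_{g+1}$ along the stabilisation map while inducting on $g$ yields the claimed isomorphism $H_k(\MM_g) \to H_k(\MM_{g+1})$ for $k \leq (g-4)/2$. The numerical slope $\tfrac{1}{2}$ arises because each simplex of $K_\bullet$ consumes one hyperbolic summand (dropping $g$ by $1$), while the spectral sequence propagates degree information at rate $1$ per simplicial degree; the explicit constant in the bound is tuned by the exact connectivity achieved in the key estimate.
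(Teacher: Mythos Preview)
Your outline is the paper's approach: a semisimplicial resolution of $\MM_g$ by embedded tethered copies of $W_{1,1}$, connectivity proved by reduction to an algebraic complex plus a Whitney-trick lifting argument, and then the spectral sequence of the augmented semisimplicial space. Three places where the sketch understates what is actually needed.

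First, the algebraic complex must be built from \emph{quadratic} modules, not just the intersection pairing. Lifting a middle-dimensional homology class to an embedded sphere with \emph{trivial} normal bundle---which you need in order to thicken it to $S^n \times D^n$---requires the quadratic refinement $\alpha:\pi_n(W)\to\bZ/\Lambda_n$ (recording the stable normal bundle of an embedded representative) to vanish on that class. So the vertices of the algebraic complex are morphisms $H \to (\pi_n(W),\lambda,\alpha)$ of $((-1)^n,\Lambda_n)$-quadratic modules, and the connectivity input is Charney's theorem for such complexes. (Relatedly, the form is $(-1)^n$-symmetric, so ``symplectic'' covers only the odd $n$ case.)

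Second, two further steps are not automatic. The complex $K_\bullet(W)$ is a semisimplicial \emph{space}, while the Whitney-trick argument only yields connectivity of the underlying discrete object $K^\delta_\bullet(W)$; the paper bridges this via a bisemisimplicial interpolation and a Serre-microfibration argument. And the spectral-sequence endgame does not proceed by comparing the resolutions of $\MM_g$ and $\MM_{g+1}$: instead one shows that all face maps $d_i:X_p\to X_{p-1}$ are homotopic (this uses transitivity of $\Diff^\partial(W_{g,1})$ on embedded copies of $H$, itself a consequence of the connectedness of the complex together with an explicit swap diffeomorphism), so that $d^1$ alternates between the stabilisation map and zero; a single spectral sequence and induction on $g$ then finish.
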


By the universal coefficient theorem, stability for homology implies
stability for cohomology; in the surface case, Mumford
(\cite{Mumford}) conjectured an explicit formula for the stable
rational cohomology, which in our notation asserts that a certain ring
homomorphism
\begin{equation*}
  \bQ[\kappa_1, \kappa_2, \dots] \lra H^*(\MM_g^1;\bQ)
\end{equation*}
is an isomorphism for $g \gg *$.  Mumford's conjecture was proved in a
strengthened version by Madsen and Weiss (\cite{MW}).

Theorem~\ref{thm:main} and our previous paper \cite{GR-W2} allow us to
prove results analogous to Mumford's conjecture and the Madsen--Weiss
theorem for the moduli spaces $\MM_g^n$ with $n > 2$. The analogue of
the Madsen--Weiss theorem for $\MM_g^n$ concerns the homology of the
limiting space $\MM_\infty^n = \colim_{g \to \infty} \MM_g^n$. There
is a certain infinite loop space $\Omega^\infty MT\theta^n$ and a
continuous map
$$\alpha : \MM_\infty^n \lra \Omega^\infty MT\theta^n$$
given by a parametrised form of the Pontrjagin--Thom construction, and
in \cite[Theorem 1.1]{GR-W2} we proved that $\alpha$ induces an
isomorphism between the homology of $\MM_\infty^n$ and the homology of
the basepoint component of $\Omega^\infty MT\theta^n$.  It is easy to
calculate the rational cohomology ring of a component of
$\Omega^\infty MT\theta^n$, and hence of $\MM^n_g$ in a range of
degrees by Theorem~\ref{thm:main}.  The result is
Corollary~\ref{cor:higher-Mumford} below, which is a
higher-dimensional analogue of Mumford's conjecture.

As explained in \cite{GR-W2}, we can associate to each $c \in
H^{k+2n}(BSO(2n))$ a cohomology class $\kappa_c \in
H^{k}(\Omega^\infty MT\theta^n)$.  Pulling it back via $\alpha$ and
all the stabilisation maps $\MM_g^n \to \MM_\infty^n$ defines classes
$\kappa_c \in H^k(\MM_g^n)$ for all $g$, sometimes called
``generalised MMM classes''.  In real cohomology these classes can
equivalently be defined as follows.  Suppose $\omega \in
\Omega^{k+2n}(\Gr_{2n}^+(\R^{N+1}))$ is a differential form and
$\sigma: \Delta^k \to \MM_g^n(\R^N)$ is a smooth map given by a
$(k+2n)$-dimensional manifold $W_\sigma \subset \Delta^k \times \R^N$
fibering over $\Delta^k$.  The fibrewise tangent spaces give a map
$\tau_\sigma: W_\sigma \to \Gr_{2n}^+(\R^{N+1})$ and hence a
differential form $\tau_\sigma^* \omega \in \Omega^{k+2n}(W_\sigma)$,
and we define
\begin{equation*}
  \kappa_c(\sigma) = \int_{W_\sigma}(\tau_\sigma^* \omega) \in \R.
\end{equation*}
We have defined a linear map
\begin{equation*}
  \Omega^{k + 2n}(\Gr_{2n}^+(\R^{N+1})) \lra
  C^k_{\mathrm{sm}}(\MM_g^n(\R^N);\R)
\end{equation*}
which by Stokes' theorem is a chain map (at least for $k \geq 0$).
Hence, it induces a map of cohomology which in the limit $N \to
\infty$ sends $c \in H^{k+2n}(BSO(2n);\R)$ to $\kappa_c \in
H^k(\MM_g^n;\R)$.  The following result is our higher-dimensional
analogue of Mumford's conjecture.
\begin{corollary}\label{cor:higher-Mumford}
  Let $n > 2$ and let $\mathcal{B}\subset H^*(BSO(2n);\bQ)$ be the set
  of monomials in the classes $e, p_{n-1}, \dots, p_{\lceil
    \frac{n+1}4 \rceil}$, of degree greater than $2n$.  Then the
  induced map
  \begin{equation*}
    \bQ[\kappa_c \,\,|\,\, c \in \mathcal{B}] \lra H^*(\MM_g;\bQ)
  \end{equation*}
  is an isomorphism in the range $\ast \leq (g-4)/2$.
\end{corollary}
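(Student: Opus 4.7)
The plan is to combine Theorem~\ref{thm:main} with the homology equivalence $\alpha$ of \cite[Theorem~1.1]{GR-W2} and then carry out a standard rational computation of the cohomology of the infinite loop space $(\Omega^\infty MT\theta^n)_0$.

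First, I would apply Theorem~\ref{thm:main} and universal coefficients to conclude that the natural map $H^*(\MM_\infty;\bQ)\to H^*(\MM_g;\bQ)$ is an isomorphism for $*\leq(g-4)/2$. Composing with $\alpha^*$, which is a rational isomorphism onto the cohomology of the basepoint component by \cite[Theorem~1.1]{GR-W2}, it suffices to establish a polynomial algebra isomorphism
\[
  H^*\bigl((\Omega^\infty MT\theta^n)_0;\bQ\bigr) \cong \bQ[\kappa_c \mid c\in\mathcal{B}]
\]
matching the integration-theoretic definition of $\kappa_c$ on the right.

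Next I would compute $H^*(MT\theta^n;\bQ)$ via the Thom isomorphism, which gives $H^*(MT\theta^n;\bQ) \cong H^{*+2n}(B\theta^n;\bQ)$. Rationally $B\theta^n$ has the cohomology of the $n$-connected cover $BSO(2n)\langle n\rangle$: the rational homotopy of $BSO(2n)$ is concentrated in degrees $4,8,\ldots,4(n-1)$ (with generators $p_1,\ldots,p_{n-1}$) and $2n$ (with generator $e$), and passing to the $n$-connected cover kills precisely the $p_i$ with $4i\leq n$, i.e.\ those with $i<\lceil(n+1)/4\rceil$. Hence $H^*(B\theta^n;\bQ) \cong \bQ[e,p_{n-1},\ldots,p_{\lceil(n+1)/4\rceil}]$, and a $\bQ$-basis of $H^{>0}(MT\theta^n;\bQ)$ is exactly the set $\mathcal{B}$.

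Since $MT\theta^n$ is a bounded-below spectrum with rational cohomology of finite type concentrated in even degrees, the standard calculation of the rational cohomology of an infinite loop space yields
\[
  H^*\bigl((\Omega^\infty MT\theta^n)_0;\bQ\bigr) \cong \mathrm{Sym}_\bQ\bigl(H^{>0}(MT\theta^n;\bQ)\bigr) = \bQ[\kappa_c \mid c\in\mathcal{B}].
\]
The main obstacle, beyond bookkeeping of generators, is to verify that the generator $\kappa_c$ appearing on the left pulls back through $\alpha$ to the class on $\MM_g^n$ defined by the integration formula $\sigma\mapsto\int_{W_\sigma}\tau_\sigma^*\omega$ recalled in the excerpt. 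This compatibility is a direct fibre-integration computation from the definition of the parametrised Pontrjagin--Thom map $\alpha$, and is essentially already contained in the construction of generalised MMM classes discussed in the lead-up to the corollary; it is the one step that is not pure rational-homotopy bookkeeping, but it is standard for this circle of ideas.
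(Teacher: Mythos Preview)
Your proposal is correct and follows essentially the same approach as the paper: the paper does not give a separate proof of this corollary but explains in the paragraphs preceding it that the result follows by combining Theorem~\ref{thm:main} with \cite[Theorem~1.1]{GR-W2} and an ``easy'' calculation of the rational cohomology of a component of $\Omega^\infty MT\theta^n$. Your outline supplies exactly that calculation (Thom isomorphism, rational cohomology of the $n$-connected cover, and the standard description of $H^*((\Omega^\infty E)_0;\bQ)$ as a free graded-commutative algebra), and your identification of the surviving Pontrjagin classes and the set $\mathcal{B}$ is correct.
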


For example, if $n = 3$, the set $\mathcal{B}$ consists of monomials
in $e$, $p_1$ and $p_2$, and therefore $H^*(\MM_g^3;\bQ)$ agrees for
$\ast \leq (g-4)/2$ with a polynomial ring in variables of degrees 2,
2, 4, 6, 6, 6, 8, 8, 10, 10, 10, 10, 12, 12, \dots.

Our methods are similar to those used to prove many homological
stability results for homology of discrete groups, namely to use a
suitable action of the group on a simplicial complex.  For example,
Harer used the action of the mapping class group on the \emph{arc
  complex} to prove his homological stability result.  In our case the
relevant groups are not discrete, so we use a simplicial space
instead---the full diffeomorphism group of $W_{g,1}$ plays the same
role for our stability result as the mapping class group in Harer's
(similar to the situation in \cite{R-WResolution}).

Independently, Berglund and Madsen (\cite{BerglundMadsen}) have
obtained a result similar to our Theorem \ref{thm:main}, for rational
cohomology in the range $k \leq \min(n-3, (g-6)/2)$.


\section{Techniques}
\label{sec:techniques}

In this section we collect the technical results needed to establish
high connectivity of the relevant simplicial spaces.  The main results
are Theorem~\ref{thm:simplex-wise-injective} and
Corollary~\ref{cor:serre-microf-connectivity}.

\subsection{Cohen--Macaulay complexes}
\label{sec:CM}

Recall from \cite[Definition 3.4]{HW} that a simplicial complex $K$ is
\emph{weakly Cohen--Macaulay} of dimension $n$ if it is
$(n-1)$-connected and the link of any $p$-simplex is
$(n-p-2)$-connected.  In this case, we write $wCM(K) \geq n$.
\begin{lemma}
  If $wCM(X) \geq n$ and $\sigma < X$ is a $p$-simplex, then
  $wCM(\Lk(\sigma)) \geq n-p-1$.
\end{lemma}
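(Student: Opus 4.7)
The plan is to verify the two conditions in the definition of weak Cohen--Macaulay directly, exploiting the compatibility between links and joins of simplices.

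First I would note that the connectivity condition on $\Lk(\sigma)$ itself, namely $(n-p-2)$-connectedness, is literally part of the hypothesis $wCM(X) \geq n$ applied to the $p$-simplex $\sigma$. So the first bullet of $wCM(\Lk(\sigma)) \geq n-p-1$ is immediate.

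For the second condition, I would take a $q$-simplex $\tau$ of $\Lk(\sigma)$ and observe the key identity
\begin{equation*}
  \Lk_{\Lk(\sigma)}(\tau) = \Lk_X(\sigma * \tau),
\end{equation*}
where $\sigma * \tau$ is a $(p+q+1)$-simplex of $X$ (vertices of $\tau$ lie in $\Lk(\sigma)$, hence are disjoint from and joinable to $\sigma$). Then the hypothesis $wCM(X) \geq n$ applied to the simplex $\sigma * \tau$ of dimension $p+q+1$ gives that its link is $(n-(p+q+1)-2)$-connected, i.e.\ $(n-p-q-3)$-connected. This is exactly the required bound $((n-p-1)-q-2)$-connectedness for $\Lk(\sigma)$ to satisfy $wCM \geq n-p-1$.

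The proof is essentially unwinding definitions; the only substantive point is the identification of the iterated link $\Lk_{\Lk(\sigma)}(\tau)$ with $\Lk_X(\sigma * \tau)$, which follows directly from the combinatorial definition of the link in a simplicial complex. I do not expect any real obstacle.
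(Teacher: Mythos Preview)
Your proposal is correct and is essentially identical to the paper's own proof: both verify the connectivity of $\Lk(\sigma)$ directly from the hypothesis, and then use the identity $\Lk_{\Lk(\sigma)}(\tau) = \Lk_X(\sigma \ast \tau)$ together with the fact that $\sigma \ast \tau$ is a $(p+q+1)$-simplex to obtain the required connectivity of links of simplices in $\Lk(\sigma)$.
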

\begin{proof}
  By assumption, $\Lk(\sigma)$ is $((n-p-1)-1)$-connected.  If $\tau <
  \Lk(\sigma)$ is a $q$-simplex, then
  \begin{equation*}
    \Lk_{\Lk(\sigma)}(\tau) = \Lk_X(\sigma \ast \tau)
  \end{equation*}
  is $((n-p-1) - q - 2)$-connected, since $\sigma \ast \tau$ is a
  $(p+q+1)$-simplex, and hence its link in $X$ is
  $(n-(p+q+1)-2)$-connected.
\end{proof}
\begin{definition}
  Let us say that a simplicial map $f: X \to Y$ of simplicial
  complexes is \emph{simplexwise injective} if its restriction to each
  simplex of $X$ is injective, i.e.\ the image of any $p$-simplex of
  $X$ is a (non-degenerate) $p$-simplex of $Y$.
\end{definition}
\begin{lemma}
  Let $f: X \to Y$ be a simplicial map of simplicial complexes.  Then
  the following conditions are equivalent.
  \begin{enumerate}[(i)]
  \item\label{item:1} $f$ is simplexwise injective,
  \item\label{item:2} $f(\Lk(\sigma)) \subset \Lk(f(\sigma))$ for
    all simplices $\sigma < X$,
  \item\label{item:3} $f(\Lk(v)) \subset \Lk(f(v))$ for all vertices
    $v \in X$,
  \item \label{item:4} The image of any 1-simplex in $X$ is a
    (non-degenerate) 1-simplex in $Y$.
  \end{enumerate}
\end{lemma}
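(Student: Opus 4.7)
The plan is to establish the equivalence via the cycle of implications (i) $\Rightarrow$ (ii) $\Rightarrow$ (iii) $\Rightarrow$ (iv) $\Rightarrow$ (i). Each step follows from elementary combinatorics of simplicial complexes, so the challenge is really just to set up the right definitions and chase them carefully rather than to produce any substantial new idea. There is no serious obstacle; the only point requiring slight care is (iv) $\Rightarrow$ (i), where we must bootstrap from information on $1$-simplices to arbitrary $p$-simplices.

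For (i) $\Rightarrow$ (ii), I would take $\tau \in \Lk(\sigma)$, so that $\sigma \ast \tau$ is a simplex of $X$ with $\sigma$ and $\tau$ disjoint. Simplexwise injectivity applied to $\sigma \ast \tau$ implies that $f(\sigma \ast \tau)$ has exactly $\dim(\sigma) + \dim(\tau) + 2$ distinct vertices, so $f(\sigma)$ and $f(\tau)$ are disjoint and $f(\sigma) \cup f(\tau) = f(\sigma \ast \tau)$ is a simplex of $Y$. This is precisely the statement that $f(\tau) \in \Lk(f(\sigma))$. The implication (ii) $\Rightarrow$ (iii) is immediate by specialising $\sigma$ to a vertex $v$.

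For (iii) $\Rightarrow$ (iv), given a $1$-simplex $\{v,w\}$ in $X$, the vertex $w$ lies in $\Lk(v)$, so by hypothesis $f(w) \in \Lk(f(v))$. In particular $f(v) \neq f(w)$ and $\{f(v), f(w)\}$ is a (non-degenerate) $1$-simplex of $Y$.

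Finally, for (iv) $\Rightarrow$ (i), let $\sigma = \{v_0, \dots, v_p\}$ be a $p$-simplex of $X$. For any $i \neq j$ the pair $\{v_i, v_j\}$ is a $1$-simplex of $X$, so by (iv) the images $f(v_i)$ and $f(v_j)$ are distinct. Hence $f(\sigma) = \{f(v_0), \dots, f(v_p)\}$ has $p+1$ distinct vertices; since $\sigma$ is a simplex of $X$ and $f$ is simplicial, $f(\sigma)$ is a simplex of $Y$, necessarily a non-degenerate $p$-simplex. This closes the cycle and completes the proof.
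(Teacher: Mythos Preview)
Your proof is correct and follows essentially the same cycle of implications (i) $\Rightarrow$ (ii) $\Rightarrow$ (iii) $\Rightarrow$ (iv) $\Rightarrow$ (i) as the paper, with the same elementary arguments at each step. The only cosmetic differences are that the paper phrases (iii) $\Rightarrow$ (iv) and (iv) $\Rightarrow$ (i) as contradictions and checks (i) $\Rightarrow$ (ii) only on vertices of $\Lk(\sigma)$ (which suffices), whereas you argue directly and work with an arbitrary simplex $\tau \in \Lk(\sigma)$.
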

\begin{proof}\
  \begin{itemize}
  \item[(\ref{item:1}) $\Rightarrow$~(\ref{item:2})] If $\sigma =
    \{v_0, \dots, v_p\}$ and $v \in \Lk(\sigma)$, then $\{v,v_0,
    \dots, v_p\} < X$ is a simplex, and therefore $\{f(v), f(v_0),
    \dots, f(v_p)\} < Y$ is a simplex.  Since $f$ is simplexwise
    injective, we must have $f(v) \not\in f(\sigma)$, so $f(v) \in
    \Lk(f(\sigma))$.
  \item[(\ref{item:2}) $\Rightarrow$~(\ref{item:3})] Trivial.
  \item[(\ref{item:3}) $\Rightarrow$~(\ref{item:4})] Let $\sigma =
    \{v_0, v_1\} < X$ be a $1$-simplex, and assume for contradiction
    that $f(v_0) = f(v_1)$.  Then we have $v_1 \in \Lk(v_0)$ but
    $f(v_1) = f(v_0) \not \in \Lk(f(v_0))$, contradicting $f(\Lk(v_0))
    \subset \Lk(f(v_0))$.
  \item [(\ref{item:4}) $\Rightarrow$~(\ref{item:1})] Let $\sigma =
    \{v_0, \dots, v_p\} < X$ be a $p$-simplex and assume for
    contradiction that $f|\sigma$ is not injective.  This means that
    $f(v_i) = f(v_j)$ for some $i \neq j$, but then the restriction of
    $f$ to the 1-simplex $\{v_i, v_j\}$ is not injective.\qedhere
  \end{itemize}
\end{proof}
The following theorem generalises the ``colouring lemma'' of Hatcher
and Wahl (\cite[Lemma 3.1]{HW}), which is the special case where $X$ is
a simplex.  The proof given below is an adaptation of theirs.
\begin{theorem}\label{thm:simplex-wise-injective}
  Let $X$ be a simplicial complex and $f: \partial I^n \to |X|$ be a
  map which is simplicial with respect to some PL triangulation of
  $\partial I^n$.  Then, if $wCM(X) \geq n$, the triangulation extends
  to a PL triangulation of $I^n$, and $f$ extends to a simplicial map
  $g: I^n \to |X|$ with the property that $g(\Lk(v)) \subset
  \Lk(g(v))$ for each interior vertex $v \in I^n - \partial I^n$.  In
  particular, $g$ is simplexwise injective if $f$ is.
\end{theorem}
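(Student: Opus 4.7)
My plan is to adapt the Hatcher--Wahl colouring argument (\cite[Lemma 3.1]{HW}) to the general Cohen--Macaulay setting, by induction on $n$. The base case $n = 0$ is trivial: $wCM(X) \geq 0$ means $X$ is non-empty, so any vertex of $X$ serves as the required map $I^0 \to |X|$.

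For $n \geq 1$, I would first extend $f \colon \partial I^n \to |X|$ to a continuous map $F \colon I^n \to |X|$ using the $(n-1)$-connectivity of $|X|$ provided by $wCM(X) \geq n$. Simplicial approximation relative to $\partial I^n$ then yields a simplicial map $g_0 \colon I^n \to |X|$ for some PL triangulation of $I^n$ extending the given one on $\partial I^n$. This settles the existence of a simplicial extension; the substantive content of the theorem is now to arrange the link condition at interior vertices. I would accomplish this by an inner induction on the number of interior vertices at which the link condition fails, modifying $g_0$ so as to reduce this count.

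For the reduction step, suppose $v$ is an interior vertex where $g_0(\Lk(v)) \not\subset \Lk(g_0(v))$. The link $L = \Lk_{I^n}(v)$ is a PL $(n-1)$-sphere and its image $Y = g_0(L) \subset X$ is an $(n-1)$-dimensional subcomplex. I would look for a vertex $w \in X^{(0)}$ with $Y \subset \Lk_X(w)$ and then redefine $g(v) := w$, which eliminates the defect at $v$. The Cohen--Macaulay hypothesis is what makes this possible: since $wCM(X) \geq n$, the link in $X$ of any $p$-simplex (for $p \leq n-1$) is $(n-p-2)$-connected and in particular non-empty, giving enough vertices joinable with the simplices of $Y$ to produce such a $w$. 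When a single vertex replacement is blocked, I would instead star-subdivide the closed star of $v$, introducing additional interior vertices whose values can be assigned with more freedom, and recurse.

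The main obstacle will be global consistency: a new choice at $v$ may disturb the link condition at neighbouring interior vertices that have already been processed, or at vertices whose star intersects the modified region. I expect the proof to require either a careful ordering of the vertices (so that each modification only affects as-yet-unprocessed vertices) or a more global modification on a larger subcomplex, together with the freedom to subdivide the triangulation of $I^n$ as needed. The higher \emph{connectivity} in the $wCM$ hypothesis, rather than mere non-emptiness of links, is what provides enough room to carry out these compatible replacements; keeping track of the interaction between the global modification and the CM-controlled room available at each step is the delicate point of the argument.
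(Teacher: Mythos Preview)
Your overall strategy---extend using connectivity, simplicially approximate, then repair defects---matches the paper's, but the repair step as you describe it has a genuine gap.

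Your vertex-based repair tries, at a defective interior vertex $v$, to find a single $w \in X$ with $g_0(\Lk(v)) \subset \Lk_X(w)$. There is no reason such a $w$ exists: the image $Y = g_0(\Lk(v))$ is an arbitrary $(n-1)$-dimensional subcomplex of $X$, and the Cohen--Macaulay hypothesis gives you high connectivity of links of \emph{simplices}, not the statement that an arbitrary subcomplex is contained in the star of a vertex. You acknowledge this may be ``blocked'' and propose to subdivide, but subdividing $\mathrm{St}(v)$ does not by itself reduce the problem: you still need to map each new interior vertex somewhere compatible with the whole of $Y$, and you have not explained why the CM condition makes this possible.

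The paper's key idea, which your proposal is missing, is to work not with defective vertices but with \emph{bad simplices} of maximal dimension. A simplex $\sigma$ is called bad if every vertex of $\sigma$ lies in an edge of $\sigma$ collapsed by $h$. Taking a bad $\sigma$ of maximal dimension $p$ forces $h(\Lk(\sigma)) \subset \Lk(h(\sigma))$ automatically (otherwise one could enlarge $\sigma$), and badness forces $\dim h(\sigma) \leq p-1$, so $wCM(\Lk(h(\sigma))) \geq n-p$. Now $h|_{\Lk(\sigma)}\colon \partial I^{n-p} \to \Lk(h(\sigma))$ is exactly an instance of the theorem in lower dimension, so by induction on $n$ one fills it in over $I^{n-p} \approx C(\Lk(\sigma))$ with no interior bad simplices. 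Joining with $h|_{\partial\sigma}$ retriangulates $\mathrm{St}(\sigma)$ and strictly decreases the number of top-dimensional bad simplices not in $\partial I^n$. This maximal-bad-simplex device is what makes the CM hypothesis bite; the vertex-by-vertex picture does not.
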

\begin{proof}
  Since $|X|$ is in particular $(n-1)$-connected, we may extend $f$ to
  a continuous map $I^n \to |X|$ which, by the simplicial
  approximation theorem, may be assumed simplicial with respect to
  some PL triangulation of $I^n$ extending the given triangulation on
  $\partial I^n$.  Thus there is a PL homeomorphism $I^n \approx |K|$
  for which the extension $h: I^n \to |X|$ is simplicial.  Let us say
  that a simplex $\sigma < K$ is \emph{bad} if any vertex $v \in
  \sigma$ is contained in a 1-simplex $\{v,v'\} \subset \sigma$ with $h(v) =
  h(v')$.  We will describe a procedure which replaces the simplicial
  map $h: I^n \to X$ by a ``better'' one, by changing both the map $h$
  and the simplicial complex $K$, arriving at the desired map $g$ in
  finitely many steps.

  If all bad simplices are contained in $\partial I^n$, we are done.
  If not, let $\sigma < K$ be a bad simplex not contained in $\partial
  I^n$, of maximal dimension $p$.  Then $p > 0$, and we must have
  $h(\Lk(\sigma)) \subset \Lk(h(\sigma))$, since otherwise we could
  join a simplex in $\Lk(\sigma)$ to $\sigma$ and get a bad simplex of
  larger dimension.  Now $|\sigma| \subset |K| \approx I^n$, so $h$
  restricts to a map
  \begin{equation*}
    \partial I^{n-p} \approx \Lk(\sigma) \lra \Lk(h(\sigma)).
  \end{equation*}
  The image $h(\sigma)$ is a simplex of dimension $\leq p-1$, since
  otherwise $h|_\sigma$ would be injective (in fact it has dimension
  $\leq (p-1)/2$ by badness), so $\Lk(h(\sigma))$ has
  \begin{equation*}
    wCM(\Lk(h(\sigma))) \geq n - (p-1) - 1 = n - p,
  \end{equation*}
  and in particular $\Lk(h(\sigma))$ is $(n-p-1)$-connected, so
  $h|_{\Lk(\sigma)}$ extends to a PL map
  \begin{equation*}
    I^{n-p} \approx C(\Lk(\sigma)) \overset{\tilde h}\lra \Lk(h(\sigma)).
  \end{equation*}
  By induction on $n$, we may assume that $\tilde h$ is simplicial
  with respect to a PL triangulation of $C(\Lk(\sigma))$ which extends
  the triangulation of $\Lk(\sigma)$, and such that all bad simplices
  of $\tilde h$ are in $\partial I^{n-p} = \Lk(\sigma)$.  We may
  extend this by joining with $h|_{\partial \sigma}$ to get a map
  \begin{equation*}
    \sigma \ast \Lk(\sigma) \approx (\partial \sigma) \ast (C
    \Lk(\sigma)) \overset{\tilde h}\lra X
  \end{equation*}
  which we may finally extend to $I^n$ by setting it equal to $h$
  outside $\sigma \ast \Lk(\sigma) \subset |K|$.  The new map
  $\tilde h$ has fewer bad simplices (not contained in $\partial I^n$)
  of dimension $p$.
\end{proof}

\subsection{Serre microfibrations}
\label{sec:serre-micr}

Let us recall from \cite{WeissCatClass} that a map $p: E \to B$ is called a
\emph{Serre microfibration} if for any $k$ and any lifting diagram
\begin{equation*}
  \xymatrix{
    \{0\} \times D^k \ar[r]^-f \ar[d] & E \ar[d]^p\\
    [0,1] \times D^k \ar[r]^-h & B}
\end{equation*}
there exists an $\epsilon > 0$ and a map $H: [0,\epsilon]\times D^k
\to E$ with $H(0,x) = f(x)$ and $p \circ H(t,x) = h(t,x)$ for all $x
\in D^k$ and $t \in [0,\epsilon]$.  This condition implies that if
$(X,A)$ is a finite CW pair then any map $X \to B$ may be lifted in a
neighbourhood of $A$, extending any prescribed lift over $A$. It also
implies the following useful observation: suppose $(Y,X)$ is a finite CW
pair and we are given a lifting problem
\begin{equation}\label{eq:1}
    \begin{aligned}
      \xymatrix{
        X \ar[r]^-f \ar[d] & E \ar[d]^p\\
        Y \ar[r]^-F & B.}
    \end{aligned}
\end{equation}
If there exists a map $G : Y \to E$ lifting $F$ and so that
$G\vert_X$ is fibrewise homotopic to $f$, then there is also a lift $H$ of
$F$ so that $H\vert_X=f$. To see this, choose a fibrewise homotopy
$\varphi : [0,1] \times X \to E$ from $G\vert_X$ to $f$, let $J =
([0,1] \times X) \cup (\{0\} \times Y) \subset [0,1] \times Y$ and
write $\varphi \cup G : J \to E$ for the map induced by $\varphi$ and
$G$. The following diagram is then commutative
  \begin{equation*}
    \xymatrix{
      J \ar[rr]^-{\varphi \cup G} \ar[d] & &  E \ar[d]^p\\
      [0,1] \times Y \ar[r]^-{\pi_Y}  & Y \ar[r]^-{F} & B, }
  \end{equation*}
and by the microfibration property there is a lift $g : U \to E$ defined on an open neighbourhood $U$ of $J$. Let $\phi : Y \to [0,1]$ be a continuous function with graph inside $U$ and so that $X \subset \phi^{-1}(1)$. Then we set $H(y) = g(\phi(y),y)$; this is a lift of $F$ as $g$ is a lift of $F \circ \pi_Y$, and if $y \in X$ then $\phi(y)=1$ and so $H(y) = g(1,y) = f(y)$, as required.

Examples of Serre microfibrations include submersions of manifolds,
and when $E$ is an open subspace of the total space of a Serre
fibration (more generally, an open subset of another Serre
microfibration).  Weiss proved in \cite[Lemma 2.2]{WeissCatClass} that
if $f: E \to B$ is a Serre microfibration with weakly contractible
fibres (i.e.\ $f^{-1}(b)$ is weakly contractible for all $b \in B$),
then $f$ is in fact a Serre fibration and hence a weak equivalence.
We shall need the following generalisation, whose proof is essentially
the same as Weiss'.
\begin{proposition}\label{prop:Weiss-lemma}
  Let $p: E \to B$ be a Serre microfibration such that $p^{-1}(b)$ is
  $n$-connected for all $b \in B$.  Then the homotopy fibres of $p$
  are also $n$-connected, i.e.\ $p$ is $(n+1)$-connected.
\end{proposition}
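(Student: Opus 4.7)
I would generalise Weiss's proof of his Lemma 2.2 to our setting. The conclusion, that homotopy fibres are $n$-connected, is equivalent to the following lifting property: for every $k \leq n+1$ and every commuting square
\[
\xymatrix{
S^{k-1} \ar[r]^{f} \ar[d] & E \ar[d]^{p} \\
D^k \ar[r]^{F} & B,
}
\]
there exists $G : D^k \to E$ with $pG = F$ and $G|_{S^{k-1}}$ fibrewise homotopic to $f$. Such a $G$, together with the fibrewise homotopy, assembles into a nullhomotopy in $\mathrm{hofib}_b(p)$ of the class represented by the square, where $b$ is the image of the cone point of $D^k$.

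To construct $G$, I view $D^k$ as $(S^{k-1} \times [0, 1]) / (S^{k-1} \times \{1\})$, so $F$ becomes a homotopy $\bar F : S^{k-1} \times [0, 1] \to B$ ending at the constant map with value $b$. The main technical step is to lift $\bar F$ to $\bar G : S^{k-1} \times [0, 1] \to E$ extending $f$. Following Weiss, I consider the set $T \subseteq [0, 1]$ of values $t$ for which $\bar F|_{S^{k-1} \times [0, t]}$ admits a lift extending $f$; then $0 \in T$, openness follows from the microfibration property applied to the terminal slice of any partial lift, and closedness follows from a compactness-and-patching argument. Weiss's original argument invokes weak contractibility of fibres in the closedness step to make local lifts compatible on overlaps; in our setting the obstructions to compatibility instead lie in $\pi_j$ of fibres for $j \leq k - 1 \leq n$, all of which vanish by hypothesis.

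Once $\bar G$ is obtained, its restriction $\bar G(\cdot, 1) : S^{k-1} \to p^{-1}(b)$ is a map into an $n$-connected fibre, and since $k - 1 \leq n$ it admits a nullhomotopy inside the fibre. Gluing this nullhomotopy to $\bar G$ and performing a minor reparametrisation produces the desired $G$. The main obstacle is the closedness of $T$: Weiss's original patching argument must be followed closely to verify that only $n$-connectivity of fibres (rather than full weak contractibility) is needed, which amounts to bookkeeping the dimensions at which the obstructions live and checking they fall within the range controlled by the hypothesis.
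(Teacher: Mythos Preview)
Your overall plan is reasonable, and the reformulation in terms of the lifting property for $(D^k,S^{k-1})$ is correct. However, the open--closed argument on $T\subseteq[0,1]$ has a gap at the closedness step. Knowing that lifts exist over $S^{k-1}\times[0,t]$ for all $t<t_0$ gives you a family of mutually \emph{incompatible} partial lifts, and nothing in the microfibration axiom lets you extract a single lift over $S^{k-1}\times[0,t_0]$ from them. To rescue closedness you would need to (i) produce a section of $p$ over $\bar F(\cdot,t_0):S^{k-1}\to B$, extend it backwards by the microfibration property, and (ii) show that any two sections over a nearby slice are fibrewise homotopic so that the backward extension can be spliced onto an existing partial lift. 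Both (i) and (ii) are statements about the fibres of $p^{S^{k-1}}:E^{S^{k-1}}\to B^{S^{k-1}}$ being non-empty and path-connected, and these do not follow formally from the hypothesis---they are where the real work lies.

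The paper handles exactly this point, but not via an open--closed argument. It first proves directly, by a Lebesgue-number subdivision of $[0,1]$ and patching via the $n$-connectivity of fibres and the observation around diagram~(\ref{eq:1}), that $p^I:E^I\to B^I$ is again a Serre microfibration with $(n-1)$-connected fibres; iterating gives that $p^{I^{k+1}}$ has non-empty fibres and $p^{I^k}$ has path-connected fibres for $k\le n$, from which the lift is obtained in one stroke. Your ``compactness-and-patching'' for closedness, if spelled out, would amount to this same Lebesgue-subdivision argument, so your route is not genuinely different from the paper's---it reorganises the same work inside a connectedness-of-$T$ wrapper. (One minor point: your final gluing only yields $pG$ homotopic to $F$ rel $S^{k-1}$, not $pG=F$; this still suffices for $(n{+}1)$-connectivity of $p$, but does not match the equality you assert.)
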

\begin{proof}
  Let us first prove that $p^I: E^I \to B^I$ is a Serre microfibration
  with $(n-1)$-connected fibres, where $X^I = \mathrm{Map}([0,1],X)$
  is the space of (unbased) paths in $X$, equipped with the
  compact-open topology.  Using the mapping space adjunction, it is
  obvious that $p^I$ is a Serre microfibration, and showing the
  connectivity of its fibres amounts to proving that any diagram of
  the form
  \begin{equation*}
    \xymatrix{
       [0,1] \times \partial D^k \ar[rr] \ar[d] & & E\ar[d]^p\\
      [0,1] \times D^k \ar[r]^-{\text{proj}}& [0,1] \ar[r] & B }
  \end{equation*}
  with $k \leq n$ admits a diagonal $h: [0,1] \times D^k \to E$.
  Since fibres of $p$ are $(k-1)$-connected (in fact $k$-connected),
  such a diagonal can be found on each $\{a\} \times D^k$, and by the
  microfibration property these lifts extend to a neighbourhood.  By
  the Lebesgue number lemma we may therefore find an integer $N \gg 0$
  and lifts $h_i: [(i-1)/N,i/N]\times D^k \to E$ for $i = 1, \dots,
  N$.  The two restrictions $h_i, h_{i+1}: \{i/N\} \times D^k \to E$
  agree on $\{i/N\} \times \partial D^k$ and map into the same fibre
  of $p$.  Since these fibres are $k$-connected, the restrictions of
  $h_i$ and $h_{i+1}$ are homotopic relative to $\{i/N\}
  \times \partial D^k$ as maps into the fibre, and we may use
  diagram~\eqref{eq:1} with $Y = [i/N,(i+1)/N] \times D^k$ and $X =
  (\{i/N\}\times D^k) \cup ([i/N,(i+1)/N] \times \partial D^k)$ to
  inductively replace $h_{i+1}$ with a homotopy which can be
  concatenated with $h_i$.  The concatenation of the $h_i$'s then
  gives the required diagonal.

  Let us now prove that for all $k \leq n$, any lifting diagram
  \begin{align*}
    \begin{aligned}
      \xymatrix{
        \{0\} \times I^k \ar[r]^-f \ar[d] & E \ar[d]^p\\
        [0,1] \times I^k \ar@{..>}[ur]^H\ar[r]^-h & B }
    \end{aligned}
  \end{align*}
  admits a diagonal map $H$ making the diagram commutative.  To see
  this, we first use that fibres of the map $p^{I^{k+1}}: E^{I^{k+1}}
  \to B^{I^{k+1}}$ are non-empty (in fact $(n-k-1)$-connected) to find
  a diagonal $G$ making the lower triangle commute.  The restriction
  of $G$ to $\{0\} \times I^k$ need not agree with $f$, but they lie
  in the same fiber of $p^{I^k}: E^{I^k} \to B^{I^k}$.  Since this map
  has path connected fibres, these are fibrewise homotopic, and hence
  we may apply~\eqref{eq:1} to replace $G$ with a lift $H$ making both
  triangles commute.

  This homotopy lifting property implies that the inclusion of
  $p^{-1}(b)$ into the homotopy fibre of $p$ over $b$ is
  $n$-connected, and hence that the homotopy fibre is $n$-connected.
\end{proof}

\subsection{Semisimplicial sets and spaces}
\label{sec:connectivity}

Let $\Delta_\mathrm{inj}^*$ be the category whose objects are the
ordered sets $[p] = (0 < \dots < p)$ with $p \geq -1$, and whose
morphisms are the injective, order preserving functions.  An augmented
semisimplicial set is a contravariant functor $X$ from
$\Delta_\mathrm{inj}^*$ to the category of sets.  As usual, such a
functor is specified by the sets $X_p = X([p])$ and face maps $d_i:
X_p \to X_{p-1}$ for $i = 0, \dots, p$.  A (non-augmented)
semisimplicial set is a functor defined on the full subcategory
$\Delta_\mathrm{inj}$ on the objects with $p \geq 0$.  Semisimplicial
spaces are defined similarly.  We shall use the following well known
result.
\begin{proposition}
  \label{prop:connectivity-of-realisation}
  Let $f_\bullet : X_\bullet \to Y_\bullet$ be a map of semisimplicial
  spaces such that $f_p: X_p \to Y_p$ is $(n-p)$-connected for all
  $p$.  Then $|f_\bullet|: |X_\bullet| \to |Y_\bullet|$ is
  $n$-connected.\qed
\end{proposition}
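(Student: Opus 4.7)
The plan is to filter both realisations by their skeleta and induct on skeletal degree. Since $X_\bullet$ and $Y_\bullet$ are semisimplicial (have no degeneracies), the $p$-skeleton $|X_\bullet|^{(p)}$ is obtained from $|X_\bullet|^{(p-1)}$ as the pushout of $|X_\bullet|^{(p-1)} \hookleftarrow X_p \times \partial \Delta^p \hookrightarrow X_p \times \Delta^p$ along the cofibration on the right, and analogously for $Y_\bullet$. In particular, $|X_\bullet|^{(p-1)} \hookrightarrow |X_\bullet|^{(p)}$ is a cofibration whose cofibre is naturally homeomorphic to $(X_p)_+ \wedge (\Delta^p/\partial\Delta^p) \cong \Sigma^p(X_p)_+$.

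I would then prove by induction on $p \geq -1$ that the restricted map $|f_\bullet|^{(p)}\colon |X_\bullet|^{(p)} \to |Y_\bullet|^{(p)}$ is $n$-connected; the base case $p=-1$ is vacuous since both sides are empty. For the inductive step, consider the map from the cofibre sequence $|X_\bullet|^{(p-1)} \to |X_\bullet|^{(p)} \to \Sigma^p(X_p)_+$ to its analogue for $Y_\bullet$. The left-hand vertical map is $n$-connected by the inductive hypothesis, while the right-hand vertical map is $\Sigma^p(f_p)_+$; since $f_p$ is $(n-p)$-connected by assumption, its $p$-fold suspension is $n$-connected. A standard long-exact-sequence comparison (five-lemma applied to the long exact sequences of the pairs $(|X_\bullet|^{(p)}, |X_\bullet|^{(p-1)})$ and $(|Y_\bullet|^{(p)}, |Y_\bullet|^{(p-1)})$, or equivalently homotopy excision applied to the defining pushout square) then yields that the middle vertical map $|f_\bullet|^{(p)}$ is $n$-connected.

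Finally, $|X_\bullet|$ is the sequential colimit of the $|X_\bullet|^{(p)}$ along cofibrations and similarly for $|Y_\bullet|$, so the conclusion for $|f_\bullet|$ follows from the fact that $\pi_{\leq n}$ is compatible with such colimits.

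The main (mild) obstacle is making the connectivity comparison in the inductive step rigorous without simple-connectedness hypotheses. This is handled case-by-case: for $p=0$ the assertion reduces to the observation that a disjoint union of $n$-connected maps is $n$-connected; for $p \geq 1$ the cofibres $\Sigma^p(X_p)_+$ and $\Sigma^p(Y_p)_+$ are sufficiently connected (and the attaching maps sufficiently well-behaved) that the five-lemma on homotopy groups of the relevant pairs applies without difficulty.
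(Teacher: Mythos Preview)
The paper states this proposition as ``well known'' and gives no proof (the statement ends with a \qed), so there is nothing to compare against; your skeletal-filtration-plus-induction outline is indeed the standard route.

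There is, however, a genuine gap in your inductive step. You want to deduce that $|f_\bullet|^{(p)}$ is $n$-connected from the $n$-connectivity of $|f_\bullet|^{(p-1)}$ and of the map of cofibres $\Sigma^p(f_p)_+$, via the five-lemma on the long exact sequences of the pairs $(|X_\bullet|^{(p)},|X_\bullet|^{(p-1)})$ and $(|Y_\bullet|^{(p)},|Y_\bullet|^{(p-1)})$. But the five-lemma needs control over the map on \emph{relative} homotopy groups of these pairs, and the comparison map $\pi_k(B,A)\to\pi_k(B/A)$ is only an isomorphism in a Blakers--Massey range that depends on the connectivity of $A=|X_\bullet|^{(p-1)}$---a quantity you have no control over (it may be disconnected, have arbitrary $\pi_1$, etc.). The $(p-1)$-connectivity of the cofibres that you invoke in your last paragraph does not rescue this; for instance, collapsing an acyclic non-simply-connected subspace shows that an $n$-connected cofibre does not force an $n$-connected map.

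The clean fix is to filter relatively rather than absolutely. Replace $f_\bullet$ levelwise by a cofibration, so $X_p\subset Y_p$ with $(Y_p,X_p)$ $(n-p)$-connected; by CW approximation one may assume all relative cells have dimension $\geq n-p+1$. Then the pair $\bigl(Y_p\times\Delta^p,\;X_p\times\Delta^p\cup Y_p\times\partial\Delta^p\bigr)$ has relative cells only in dimension $\geq (n-p+1)+p=n+1$, hence is $n$-connected. Setting $G_p=|X_\bullet|^{(p)}\cup_{|X_\bullet|^{(p-1)}}|Y_\bullet|^{(p-1)}$, the map $|X_\bullet|^{(p)}\to G_p$ is $n$-connected (cobase change of the inductive hypothesis), and $(|Y_\bullet|^{(p)},G_p)$ is obtained by pushing out the $n$-connected pair above, so $G_p\to|Y_\bullet|^{(p)}$ is also $n$-connected. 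Composing gives the inductive step with no appeal to homotopy excision or $\pi_1$ hypotheses.
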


Let us briefly discuss the relationship between simplicial complexes
and semisimplicial sets.  To any simplicial complex $K$ there is an
associated semisimplicial set $K_\bullet$, whose $p$-simplices are the
injective simplicial maps $\Delta^p \to K$, i.e.\ ordered
$(p+1)$-tuples of vertices in $K$ spanning a $p$-simplex.  There is a
natural surjection $|K_\bullet| \to |K|$, and any choice of total
order on the set of vertices of $K$ induces a splitting $|K| \to
|K_\bullet|$.  In particular, $|K|$ is at least as connected as
$|K_\bullet|$.

\begin{proposition}\label{prop:semisimplicial-Serre-mic-fib}
  Let $Y_\bullet$ be a semisimplicial set, and $Z$ be a Hausdorff
  space. Let $X_\bullet \subset Y_\bullet \times Z$ be a
  sub-semisimplicial space which in each degree is an open
  subset. Then $\pi: |X_\bullet| \to Z$ is a Serre microfibration.
\end{proposition}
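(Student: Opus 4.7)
Since $Y_\bullet$ is a semisimplicial set, every point of $|X_\bullet|$ has a unique representative $(\sigma, t, z) \in X_{|\sigma|} \times \Delta^{|\sigma|}$ with $t$ in the interior of $\Delta^{|\sigma|}$. For each $x \in D^k$ write $(\sigma_x, t_x, z_x)$ for the canonical representative of $f(x)$; then $z_x = h(0, x)$ and $U_{\sigma_x} := \{z \in Z : (\sigma_x, z) \in X_{|\sigma_x|}\}$ is an open neighbourhood of $z_x$ in $Z$. The plan is to define the lift pointwise by
\[
H(s, x) := [(\sigma_x, h(s, x)), t_x] \in |X_\bullet|,
\]
valid whenever $h(s, x) \in U_{\sigma_x}$, and then to verify this gives a continuous lift on $[0, \epsilon] \times D^k$ for some uniform $\epsilon > 0$.

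A uniform $\epsilon$ is produced from openness of the $U_{\sigma_x}$, continuity of $h$, and compactness of $\{0\} \times D^k$: each $(0, x)$ admits a neighbourhood mapped by $h$ into $U_{\sigma_x}$, and a finite subcover of $\{0\} \times D^k$ provides the uniform bound. At $s = 0$ we have $H(0, x) = [(\sigma_x, z_x), t_x] = f(x)$, and $\pi \circ H(s, x) = h(s, x)$ by construction, so once continuity is established we will have produced the desired lift.

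Continuity of $H$ is the main task. Fix $(s_0, x_0)$; since $f(D^k)$ is compact in $|X_\bullet|$ it meets only finitely many open cells (a consequence of the skeletal filtration $|X_\bullet| = \colim_n |X_\bullet|^{(n)}$ together with the disjoint-union structure $X_p = \coprod_\sigma U_\sigma$), so only finitely many simplices $\tau$ with $\sigma_{x_0}$ as a face arise as $\sigma_x$ for $x$ near $x_0$. For each such $\tau$, continuity of $f$ at $x_0$ in the quotient topology of $|X_\bullet|$ forces the following: if $x_n \to x_0$ with $\sigma_{x_n} = \tau$, then $z_{x_n} \to z_{x_0}$ in $Z$ and $t_{x_n} \to \delta_\tau(t_{x_0})$ in $\Delta^{|\tau|}$, where $\delta_\tau : \Delta^{|\sigma_{x_0}|} \hookrightarrow \Delta^{|\tau|}$ is the face inclusion realising $\sigma_{x_0}$ as the corresponding face of $\tau$. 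Combined with continuity of $h$ and the face identification $[(\tau, z), \delta_\tau(t_{x_0})] = [(\sigma_{x_0}, z), t_{x_0}]$ in $|X_\bullet|$, this yields $H(s_n, x_n) \to H(s_0, x_0)$. Assembling the finitely many possibilities for $\tau$ produces a joint neighbourhood of $(s_0, x_0)$ witnessing continuity.

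The main obstacle is this continuity verification, which requires unpacking the quotient topology on $|X_\bullet|$ to justify the above convergence of $(\sigma_x, t_x)$---a discontinuous function of $x$---and then combining finitely many strata near $x_0$ into a single neighbourhood. The compactness-plus-finite-cells reduction is essential: without it one would have to control infinitely many $\tau$'s simultaneously, for which no uniform argument is available.
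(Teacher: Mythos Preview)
Your overall strategy---keep the simplex $\sigma_x$ and barycentric coordinate $t_x$ fixed and slide the $Z$-coordinate along $h$---is exactly the paper's.  However, there is a genuine gap, and a symptom of it is that your argument never invokes the Hausdorff hypothesis on $Z$.

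The missing step is the following: writing $C_\tau = \{x \in D^k : \sigma_x = \tau\}$, one needs that $\pi\circ f$ sends the \emph{closure} $\overline{C}_\tau$ into the open set $U_\tau = Z_\tau$.  This is not automatic and is precisely what the paper isolates as its main technical claim, proving it by showing (using that $Z$ is Hausdorff) that a certain subset of $|X_\bullet|$ is closed.  Without it, both of your steps break down:
\begin{itemize}
\item Your uniform-$\epsilon$ argument covers $\{0\}\times D^k$ by finitely many $V_{x_i}$ with $h(V_{x_i}) \subset U_{\sigma_{x_i}}$, but for $(s,x) \in V_{x_i}$ you need $h(s,x) \in U_{\sigma_x}$, not $U_{\sigma_{x_i}}$.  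When $\sigma_{x_i}$ is a proper face of $\sigma_x$ one has $U_{\sigma_x} \subsetneq U_{\sigma_{x_i}}$, so the inclusion goes the wrong way.  The correct argument uses the finite \emph{closed} cover by the $\overline{C}_\tau$: once one knows $h(\{0\}\times\overline{C}_\tau) \subset Z_\tau$, compactness gives $\epsilon_\tau>0$ with $h([0,\epsilon_\tau]\times\overline{C}_\tau)\subset Z_\tau$, and then $\epsilon = \min_\tau \epsilon_\tau$ works.
\item Your continuity argument asserts the face identification $[(\tau,z),\delta_\tau(t_{x_0})] = [(\sigma_{x_0},z),t_{x_0}]$ in $|X_\bullet|$ with $z = h(s_0,x_0)$, but the left side is only defined when $z \in Z_\tau$.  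Knowing $z \in Z_{\sigma_{x_0}}$ is not enough: the injection $|X_\bullet| \hookrightarrow |Y_\bullet|\times Z$ is \emph{not} a homeomorphism onto its image, so convergence of $(p,\pi)\circ H$ does not give convergence of $H$.  One really needs $h(s_0,x_0)\in Z_\tau$, which follows from the key claim together with the compactness step above.
\end{itemize}
Once $h(\{0\}\times\overline{C}_\tau)\subset Z_\tau$ is established, the paper defines the lift cell-by-cell as a visibly continuous map $[0,\epsilon]\times\overline{C}_\tau \to Z_\tau\times\Delta^{|\tau|}\to |X_\bullet|$ and glues over the finite closed cover; your sequential argument could also be completed at that point.
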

\begin{proof}
  For $\sigma \in Y_n$, let us write $Z_\sigma \subset Z$ for the open
  subset defined by $(\{\sigma\} \times Z) \cap X_n = \{\sigma\}
  \times Z_\sigma$.  Points in $|X_\bullet|$ are described by data
$$(\sigma \in Y_n ;\, z \in Z_\sigma;\, (t_0, \ldots, t_n) \in \Delta^n)$$
up to the evident relation when some $t_i$ is zero, but we emphasise
that the continuous, injective map $\iota = p \times \pi : |X_\bullet|
\hookrightarrow |Y_\bullet| \times Z$ will not typically be a
homeomorphism onto its image.

Suppose we have a lifting problem
  \begin{equation*}
    \xymatrix{
       \{0\} \times D^k \ar[r]^-{f} \ar[d] &  \vert X_\bullet \vert\ar[d]^{\pi}\\
      [0,1] \times D^k \ar[r]^-{F} & Z. }
  \end{equation*}
  The composition $D^k \overset{f}\to |X_\bullet| \overset{p}\to
  |Y_\bullet|$ is continuous, so the image of $D^k$ is compact and
  hence contained in a finite subcomplex, and it intersects finitely
  many open simplices $\{\sigma_i\} \times \Int(\Delta^{n_i})\subset
  |Y_\bullet|$.  The sets $C_{\sigma_i} = (p \circ f)^{-1}(\{\sigma_i\}
  \times \Int(\Delta^{n_i}))$ then cover $D^k$, and their closures
  $\overline{C}_{\sigma_i}$ give a finite cover of $D^k$ by closed
  sets. Let us write $f\vert_{{C}_{\sigma_i}}(x) =
  (\sigma_i;z(x);t(x))$, with $z(x) \in Z_{\sigma_i} \subset Z$ and
  $t(x) = (t_0(x), \ldots, t_{n_i}(x)) \in \Int (\Delta^{n_i})$.

Certainly $\pi \circ f$ sends the set $C_{\sigma_i}$ into the open set
$Z_{\sigma_i}$, but we claim that $\overline{C}_{\sigma_i}$ is also
mapped into $Z_{\sigma_i}$.  To see this, we consider a sequence
$(x^j) \in C_{\sigma_i}$, $j \in \bN$ converging to a point $x \in
\overline{C}_{\sigma_i} \subset D^k$ and verify that $z= \pi\circ f(x)
\in Z_{\sigma_i}$.  As $f$ is continuous, the sequence $f(x^j) =
(\sigma_i;z(x^j);t(x^j)) \in |X_\bullet|$ converges to $f(x)$, and
passing to a subsequence, we may assume that the $t(x^j)$ converge to
a point $t\in\Delta^{n_i}$.  The
subset
\begin{equation*}
  A = \{f(x^j)\,\, \vert\,\, j \in \bN\} \subset |X_\bullet|
\end{equation*}
is contained in $\pi^{-1}(Z_{\sigma_i})$ and has $f(x)$ as a limit
point in $|X_\bullet|$, so if $z = \pi(f(x)) \not\in Z_{\sigma_i}$,
the set $A$ is not closed in $|X_\bullet|$.  For a contradiction, we
will show that $A$ is closed, by proving that its inverse image in
$\coprod_\tau \{\tau\} \times Z_\tau \times \Delta^{|\tau|}$ is
closed, where the coproduct is over all simplices $\tau \in \coprod_n
Y_n$.  The inverse image in $\{\sigma_i\} \times Z_{\sigma_i} \times
\Delta^{n_i}$ is
\begin{equation*}
  B = \{(\sigma_i;\, z(x^j);\, t(x^j))\,\, \vert\,\, j \in \bN\},
\end{equation*}
which is closed (since $Z$ is Hausdorff, taking the closure in
$\{\sigma_i\} \times Z \times \Delta^{n_i}$ adjoins only the point
$(\sigma_i; z; t)$, which by assumption is outside $\{\sigma_i\} \times
Z_{\sigma_i} \times \Delta^{n_i}$).  If $\sigma_i = \theta^*(\tau)$
for a morphism $\theta \in \Delta_{\mathrm{inj}}$, we have $Z_\tau
\subset Z_{\sigma_i}$ and hence $B \cap (\{\sigma_i\} \times Z_\tau
\times \Delta^{|\sigma_i|})$ is closed in $\{\sigma_i\} \times Z_\tau
\times \Delta^{|\sigma_i|}$ so applying $\theta_*: \Delta^{|\sigma_i|}
\to \Delta^{|\tau|}$ gives a closed subset $B_\theta \subset
\{\tau\} \times Z_\tau \times \Delta^{|\tau|}$.  The inverse image of
$A$ in $\{\tau\} \times Z_\tau \times \Delta^{|\tau|}$ is the union of
the $B_\theta$ over the finitely many $\theta$ with $\theta^*(\tau) =
\sigma_i$, and is hence closed.

We have a continuous map $F_i = F\vert_{[0,1] \times \overline{C}_{\sigma_i}} : [0,1] \times \overline{C}_{\sigma_i} \to
Z$ and $F_i^{-1}(Z_{\sigma_i})$ is an open neighbourhood of the
compact set $\{0\} \times \overline{C}_{\sigma_i}$, so there is an
$\epsilon_i > 0$ such that $F_i([0,\epsilon_i] \times
\overline{C}_{\sigma_i}) \subset Z_{\sigma_i}$. We set $\epsilon =
\min_i (\epsilon_i)$ and define the lift
$$\widetilde{F}_i(s, x) = (\sigma_i;F_i(s,x);t(x)) : [0,\epsilon]
\times \overline{C}_{\sigma_i} \lra \{\sigma_i\} \times Z_{\sigma_i} \times \Delta^{n_i} \lra |X_\bullet|,$$
which is clearly continuous. The functions $\widetilde{F}_i$ and
$\widetilde{F}_j$ agree where they are both defined, and so these glue
to give a continuous lift $\widetilde{F}$ as required.
\end{proof}

\begin{corollary}\label{cor:serre-microf-connectivity}
  Let $Z$, $Y_\bullet$, and $X_\bullet$ be as in Proposition
  \ref{prop:semisimplicial-Serre-mic-fib}.  For $z \in Z$, let
  $X_\bullet(z) \subset Y_\bullet$ be the sub-semisimplicial set
  defined by $X_\bullet \cap (Y_\bullet \times \{z\}) = X_\bullet(z)
  \times \{z\}$ and suppose that $|X_\bullet(z)|$ is $n$-connected for all
  $z \in Z$.  Then the map $\pi: |X_\bullet| \to Z$ is
  $(n+1)$-connected.
\end{corollary}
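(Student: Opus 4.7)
The plan is to deduce Corollary~\ref{cor:serre-microf-connectivity} directly from the two preceding results. Proposition~\ref{prop:semisimplicial-Serre-mic-fib} already tells us that $\pi : |X_\bullet| \to Z$ is a Serre microfibration, and Proposition~\ref{prop:Weiss-lemma} upgrades a Serre microfibration with $n$-connected point-set fibres to an $(n+1)$-connected map. Hence the only missing step is to identify each point-set fibre $\pi^{-1}(z)$ with $|X_\bullet(z)|$ topologically, at which point the hypothesis of the corollary becomes precisely the hypothesis of Proposition~\ref{prop:Weiss-lemma}.

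For the fibre identification, I would first check that $X_\bullet(z)$ is genuinely a semisimplicial set: in degree $n$, $X_n(z) \subset Y_n$ is a subset of a discrete set and the face maps of $Y_\bullet$ restrict to it, since $X_\bullet \subset Y_\bullet \times Z$ is a sub-semisimplicial space and face maps act as the identity on the $Z$-factor. Its geometric realization $|X_\bullet(z)|$ is thus a CW complex. The obvious assignment $(\sigma; t) \mapsto (\sigma; z; t)$ gives a continuous bijection $|X_\bullet(z)| \to \pi^{-1}(z) \subset |X_\bullet|$. To see this is a homeomorphism, I would use that $Z$ is Hausdorff, so $\{z\}$ is closed in $Z$ and $X_n(z) \times \{z\} = X_n \cap (Y_n \times \{z\})$ is closed in $X_n$; a routine check with the quotient description $|X_\bullet| = (\coprod_n X_n \times \Delta^n)/\sim$ then shows that $\pi^{-1}(z)$ is closed in $|X_\bullet|$ with subspace topology matching the standard realization topology on $|X_\bullet(z)|$.

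With the fibre identification in hand, the corollary is immediate: $\pi$ is a Serre microfibration by Proposition~\ref{prop:semisimplicial-Serre-mic-fib}, each fibre $\pi^{-1}(z) = |X_\bullet(z)|$ is $n$-connected by hypothesis, and Proposition~\ref{prop:Weiss-lemma} then yields that $\pi$ is $(n+1)$-connected. The real content lies in the two earlier propositions; the only mildly delicate point in the present deduction is the topological identification of the fibre, which is where I would concentrate attention and where the Hausdorff hypothesis on $Z$ is actually used.
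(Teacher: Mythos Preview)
Your proposal is correct and follows essentially the same strategy as the paper: invoke Proposition~\ref{prop:semisimplicial-Serre-mic-fib} for the microfibration, identify $\pi^{-1}(z)$ with $|X_\bullet(z)|$, and apply Proposition~\ref{prop:Weiss-lemma}. The only difference is in how the fibre identification is carried out. You argue directly with the quotient description of $|X_\bullet|$, using that $Z$ is Hausdorff to get $X_n(z)\times\{z\}$ closed in $X_n$; this works, though the ``routine check'' deserves a line or two more than you give it. The paper instead observes that the composition $|X_\bullet(z)| \to |X_\bullet| \to |Y_\bullet|$ realises the inclusion of a sub-semisimplicial \emph{set} into $Y_\bullet$ and is therefore a homeomorphism onto its image, from which it follows that $|X_\bullet(z)| \to |X_\bullet|$ is already an embedding. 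This factoring through $|Y_\bullet|$ is slightly slicker and makes no explicit use of the Hausdorff hypothesis at this step, but the two arguments are minor variants of one another.
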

\begin{proof}
  This follows by combining Propositions~\ref{prop:Weiss-lemma}
  and~\ref{prop:semisimplicial-Serre-mic-fib}, once we prove that
  $|X_\bullet(z)|$ is homeomorphic to $\pi^{-1}(z)$ (in the subspace
  topology from $|X_\bullet|$).  Since $X_\bullet(z)\subset
  Y_\bullet$, the composition $|X_\bullet(z)| \to |X_\bullet| \to
  |Y_\bullet|$ is a homeomorphism onto its image.  It follows that
  $|X_\bullet(z)| \to |X_\bullet|$ is a homeomorphism onto its image,
  which is easily seen to be $\pi^{-1}(z)$.
\end{proof}


\section{Algebra}\label{sec:algebra}

We fix $\epsilon = \pm 1$. Let $\Lambda \subset \bZ$ be a subgroup satisfying
$$\{a - \epsilon {a} \,\,\vert\,\, a \in \bZ \} \subset \Lambda \subset \{a \in \bZ \,\,\vert\,\, a + \epsilon {a} =0\}.$$
Following Bak (\cite{Bak, Bak2}), we call such a pair $(\epsilon,
\Lambda)$ a \emph{form parameter}. An \emph{$(\epsilon,
  \Lambda)$-Quadratic module} $(M, \lambda, \alpha)$ is the data of a
$\bZ$-module $M$, a $\epsilon$-symmetric bilinear form
$$\lambda : M \otimes M \lra \bZ$$
and a $\Lambda$-quadratic form
$$\alpha : M \lra \bZ/\Lambda$$
whose associated bilinear form is $\lambda$ reduced modulo
$\Lambda$. By this we mean a function $\alpha$ such that
\begin{enumerate}[(i)]
\item $\alpha(a \cdot x) = a^2 \cdot \alpha(x)$ for $a \in \bZ$,
\item $\alpha(x+y) = \alpha(x) + \alpha(y) + \lambda (x, y)$.
\end{enumerate}
We say the $(\epsilon, \Lambda)$-Quadratic module is
\emph{non-degenerate} if the map
\begin{align*}
  M &\lra M^*\\
  x &\longmapsto \lambda(-, x)
\end{align*}
is an isomorphism. A \emph{morphism} of $(\epsilon,
\Lambda)$-Quadratic modules is a homomorphism $f : M \to N$ of modules
which is an isometry for $\lambda$, and such that $\alpha_M = \alpha_N
\circ f$. If $M$ is non-degenerate, any such morphism is injective, as
$$M \overset{f}\lra N \lra N^* \overset{f^*}\lra M^*$$
is an isomorphism. The \emph{hyperbolic module} $H$ is the $(\epsilon,
\Lambda)$-Quadratic module given by the data
$$\left(\bZ^2 \text{ with basis } e, f ;\, 
\left (\begin{array}{cc}
0 & 1 \\
\epsilon & 0
\end{array}\right);\, \alpha(e)=\alpha(f)=0\right).$$

\begin{definition}
  For an $(\epsilon, \Lambda)$-Quadratic module $(M,\lambda,\alpha)$,
  let $K^a(M)$ be the simplicial complex whose vertices are morphisms
  $e:H \to M$ of quadratic modules.  The set $\{e_0, \dots, e_p\}$ is
  a $p$-simplex if the submodules $e_i(H) \subset M$ are orthogonal
  with respect to $\lambda$ (and no condition on the quadratic forms).
 
  If $\sigma = \{v_0, \ldots, v_p\} < K^a(M)$, then the link
  $\Lk(\sigma)$ is isomorphic to $K^a(M \cap \gen(v_0, \ldots,
  v_p)^\perp)$.
\end{definition}

This complex is almost the same as one considered by Charney, which
she proves to be highly connected.
\begin{theorem}\label{thm:Charney}
  Let $M = H^{\oplus g}$. Then $|K^a(M)|$ is $\lfloor (g-5)/2 \rfloor$-connected.
\end{theorem}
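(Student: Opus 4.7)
The plan is to deduce this from a connectivity theorem of Charney for an almost identical complex. Charney considers the variant $K^C(M)$ whose vertices are $\lambda$-isometric embeddings $H \to M$ (pairs $(x,y)$ in $M$ with $\lambda(x,x) = \lambda(y,y) = 0$ and $\lambda(x,y) = 1$, ignoring the quadratic refinement $\alpha$), with the same orthogonality condition on simplices, and proves that $|K^C(H^{\oplus g})|$ is $\lfloor (g-5)/2 \rfloor$-connected. In the form parameters $(\epsilon, \Lambda) = (+1, 0)$ and $(-1, \bZ)$, the axioms of the quadratic form force $\alpha = 0$ on any $\lambda$-isotropic vector (in the first case because $\lambda(x,x) = 2\alpha(x)$ in $\bZ$, and in the second because $\bZ/\Lambda = 0$), so $K^a = K^C$ and the theorem is immediate.

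In the remaining case $(\epsilon, \Lambda) = (-1, 2\bZ)$, I would adapt Charney's inductive proof to $K^a$ directly. The key observation is that for any $p$-simplex $\sigma$ of $K^a(H^{\oplus g})$, the span $\gen(\sigma)$ is isomorphic as a quadratic module to $H^{\oplus(p+1)}$ (since all its hyperbolic generators already have $\alpha = 0$), and by non-degeneracy its orthogonal complement is isomorphic to $H^{\oplus(g-p-1)}$. The link identification from the definition therefore reads $\Lk(\sigma) \cong K^a(H^{\oplus(g-p-1)})$, matching the inductive setup Charney uses for $K^C$. Her argument -- contracting a sphere in $K^a(H^{\oplus g})$ onto the star of a chosen vertex $v_0$ via null-homotopies in the highly connected link $K^a(H^{\oplus(g-1)})$, in the spirit of the bad-simplex argument underlying Theorem \ref{thm:simplex-wise-injective} -- then transfers verbatim.

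The main obstacle is checking that every hyperbolic pair arising in Charney's constructions can be chosen with $\alpha = 0$ on both entries, rather than merely satisfying the $\lambda$-conditions. Since at every stage the relevant ambient submodule of $M$ is of the form $H^{\oplus k}$ as a full quadratic module (by the observation above), the basis of a hyperbolic summand provides such pairs in abundance, and any ad hoc replacement needed during the induction -- for instance, swapping a pair $(x,y)$ with $\alpha(x) = 1 \in \bZ/2$ for $(x + y, y)$, which satisfies $\alpha(x+y) = \alpha(x) + \alpha(y) + \lambda(x,y) = 0 \pmod 2$ and remains $\lambda$-orthogonal to anything orthogonal to both $x$ and $y$ -- can be carried out without disturbing the simplicial structure. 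With this compatibility verified, Charney's connectivity bound $\lfloor(g-5)/2\rfloor$ carries over to $K^a(H^{\oplus g})$.
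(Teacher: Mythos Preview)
Your proposal rests on a misreading of Charney's result. Her paper works throughout in Bak's framework of $(\epsilon,\Lambda)$-quadratic modules, so her connectivity theorem already applies to the complex $K^a(M)$ as defined here, with the quadratic refinement $\alpha$ built in. There is no auxiliary ``bilinear-only'' complex $K^C$ to reduce to, and hence no case analysis on the form parameter is required.

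The paper's proof is accordingly much shorter than yours: it simply cites Charney's Corollary~3.3 for the connectivity of the \emph{ordered} version $|K^a_\bullet(M)|$ (the associated semisimplicial set), and then invokes the observation from Section~\ref{sec:connectivity} that any total order on the vertices splits the surjection $|K^a_\bullet(M)| \to |K^a(M)|$, so the simplicial complex is at least as connected. That passage from ordered to unordered is the only content beyond the citation, and you do not mention it.

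Your treatment of the two ``easy'' form parameters is correct but unnecessary. Your sketch for $(\epsilon,\Lambda) = (-1,2\bZ)$---modifying hyperbolic pairs by $(x,y) \mapsto (x+y,y)$ to force $\alpha = 0$ and then asserting that Charney's induction ``transfers verbatim''---is plausible but is not a proof as written: one would have to go through her argument step by step to check that each construction survives the modification without breaking orthogonality or the inductive hypotheses. Since Charney already did this work in the quadratic setting, the exercise is moot.
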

\begin{proof}
  The simplicial complex $K^a(M)$ has an associated semisimplicial set
  $K^a_\bullet(M)$, and in {\cite[Corollary 3.3]{Charney}} it is
  proved that (the barycentric subdivision of) $|K^a_\bullet(M)|$ is
  $\lfloor (g-5)/2\rfloor$-connected.  As mentioned in Section
  \ref{sec:connectivity}, this implies that $|K^a(M)|$ is also
  $\lfloor (g-5)/2\rfloor$-connected.
\end{proof}

\begin{corollary}[Transitivity]\label{cor:AlgebraicTransitivity}
  If $e_0, e_1 : H \to H^{\oplus g}$ are morphisms of quadratic modules
  and $g \geq 5$, there is an isomorphism of quadratic modules $f :
  H^{\oplus g} \to H^{\oplus g}$ such that $e_1 = f \circ e_0$.
\end{corollary}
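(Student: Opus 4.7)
The plan is to reduce to a local statement via the connectivity provided by Theorem~\ref{thm:Charney} and then handle the local case by an explicit construction. For $g \geq 5$, Theorem~\ref{thm:Charney} asserts that $|K^a(H^{\oplus g})|$ is $0$-connected, which is equivalent to the $1$-skeleton of the complex $K^a(H^{\oplus g})$ being connected. Consequently, any two vertices $e_0, e_1$ are connected by an edge-path $e_0 = v_0, v_1, \dots, v_n = e_1$ in which each $\{v_i, v_{i+1}\}$ spans a $1$-simplex, i.e.\ has $v_i(H)$ and $v_{i+1}(H)$ orthogonal with respect to $\lambda$.

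The key step is then to establish the local statement: if $e_0, e_1: H \to M := H^{\oplus g}$ satisfy $e_0(H) \perp e_1(H)$, there is an isomorphism $f: M \to M$ of quadratic modules with $f \circ e_0 = e_1$. Write $V_i = e_i(H)$. Non-degeneracy of $\lambda|_{V_0}$ (which is just the hyperbolic form) yields an orthogonal splitting $M = V_0 \oplus V_0^\perp$: for $m \in M$, the functional $\lambda(-, m)|_{V_0}$ is represented by a unique $v \in V_0$ via the isomorphism $V_0 \cong V_0^*$, and then $m - v \in V_0^\perp$; the same non-degeneracy gives $V_0 \cap V_0^\perp = 0$. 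Applying the same argument to $V_1 \subset V_0^\perp$ produces $V_0^\perp = V_1 \oplus W$ with $W = V_0^\perp \cap V_1^\perp$. I then define $f: M \to M$ by $f(e_0(h)) = e_1(h)$, $f(e_1(h)) = e_0(h)$ for $h \in H$, and $f|_W = \id_W$; this is well-defined because each $e_i$, being a morphism out of a non-degenerate module, is injective and so induces an isomorphism $H \cong V_i$. Since $V_0 \perp V_1 \perp W$ and both $e_0$ and $e_1$ preserve the quadratic data (in particular $\alpha_M \circ e_i = \alpha_H$), a direct check verifies that $f$ preserves $\lambda$ and $\alpha$, and by construction $f \circ e_0 = e_1$.

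Chaining: applying the local case to each consecutive pair produces isomorphisms $f_i: M \to M$ with $f_i \circ v_i = v_{i+1}$, and then $f := f_{n-1} \circ \cdots \circ f_0$ satisfies $f \circ e_0 = e_1$. The only real content lies in the local construction; the mild subtlety is checking that the orthogonal splitting $M = V_0 \oplus V_0^\perp$ holds over $\bZ$, which it does precisely because $V_0 \cong H$ is non-degenerate. Notably, we do not need to identify $W$ with $H^{\oplus (g-2)}$ or invoke any form of Witt cancellation.
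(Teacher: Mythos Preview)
Your proof is correct and follows essentially the same strategy as the paper: handle the orthogonal case by an explicit swap automorphism on the splitting $e_0(H) \oplus e_1(H) \oplus W$, then use connectivity of $K^a(H^{\oplus g})$ for $g \geq 5$ to chain along an edge-path. You supply more detail than the paper on why the orthogonal splitting exists over $\bZ$ and why the swap preserves $\alpha$, but the argument is the same.
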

\begin{proof}
Suppose first that $e_0$ and $e_1$ are orthogonal. Then
$$H^{\oplus g} \cong e_0(H) \oplus e_1(H) \oplus M$$
and there is an evident automorphism of quadratic modules which swaps the $e_i(H)$. Now, the relation between morphisms $e : H \to H^{\oplus g}$ of differing by an automorphism is an equivalence relation, and we have just shown that adjacent vertices in $K^a(H^{\oplus g})$ are equivalent. When $g \geq 5$ this simplicial complex is connected, and so all vertices are equivalent.
\end{proof}

\begin{corollary}[Cancellation]\label{cor:AlgebraicCancellation}
Suppose that $M$ is a quadratic module and there is an isomorphism $ M \oplus H \cong H^{\oplus g+1}$ for $g \geq 4$. Then $M \cong H^{\oplus g}$.
\end{corollary}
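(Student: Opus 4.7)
The plan is to apply the transitivity statement of Corollary~\ref{cor:AlgebraicTransitivity} to reduce cancellation to an orthogonal-complement computation inside $H^{\oplus g+1}$.

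Fix an isomorphism $\phi : M \oplus H \to H^{\oplus g+1}$, and define two morphisms of quadratic modules $H \to H^{\oplus g+1}$: let $e_0$ be the restriction of $\phi$ to the $H$ summand of $M \oplus H$, and let $e_1$ be the inclusion of $H$ as the first hyperbolic summand of $H^{\oplus g+1}$. Since $g+1 \geq 5$, Corollary~\ref{cor:AlgebraicTransitivity} applies and yields an automorphism $f : H^{\oplus g+1} \to H^{\oplus g+1}$ of quadratic modules with $e_1 = f \circ e_0$.

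Now consider the composite isomorphism $\psi = f \circ \phi : M \oplus H \to H^{\oplus g+1}$. By construction $\psi$ carries the second summand $H$ onto $e_1(H)$, the standard first hyperbolic summand. Since $\psi$ is an isometry and $M$ is orthogonal to the second summand of $M \oplus H$, the image $\psi(M)$ is orthogonal to $e_1(H)$, so it lies in $e_1(H)^\perp = H^{\oplus g}$ (the sum of the remaining $g$ hyperbolic summands). The restriction $\psi|_M : M \to H^{\oplus g}$ is injective (because $\psi$ is), and a dimension/orthogonal-complement count inside the non-degenerate module $H^{\oplus g+1}$ shows it is also surjective; more invariantly, $\psi(M)$ and $\psi(H) = e_1(H)$ together span $H^{\oplus g+1}$, forcing $\psi(M) = H^{\oplus g}$. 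Hence $M \cong H^{\oplus g}$.

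I do not expect any genuine obstacle here: all the work is already done in Corollary~\ref{cor:AlgebraicTransitivity}, which required $g \geq 5$ and in turn depended on the connectivity statement of Theorem~\ref{thm:Charney}. The only thing to verify carefully is that the isomorphism $\psi$ really identifies $M$ with the orthogonal complement of the first hyperbolic summand, which is immediate since $\psi$ preserves both the decomposition into orthogonal summands and the form.
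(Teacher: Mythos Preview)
Your proof is correct and follows essentially the same approach as the paper's: both restrict the given isomorphism to the $H$ summand, use Corollary~\ref{cor:AlgebraicTransitivity} (which requires $g+1 \geq 5$) to move this copy of $H$ onto a standard hyperbolic summand, and then identify $M$ with the orthogonal complement. The paper's version is terser---it simply observes that the two morphisms $H \to H^{\oplus g+1}$ differ by an automorphism and hence have isomorphic orthogonal complements---whereas you spell out the identification of $\psi(M)$ with $H^{\oplus g}$ more explicitly, but the content is the same.
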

\begin{proof}
An isomorphism $\varphi : M \oplus H \to H^{\oplus g+1}$ gives a morphism $\varphi\vert_H : H \to H^{\oplus g+1}$ of quadratic modules, and we also have the standard inclusion $e_{g+1} : H \to H^{\oplus g+1}$. When $g+1 \geq 5$, the previous corollary shows that these differ by an automorphism of $H^{\oplus g+1}$, and in particular their orthogonal complements are isomorphic.
\end{proof}

\begin{corollary}\label{cor:wCM-Ka}
  Let $M = H^{\oplus g}$. Then  $wCM(K^a(M)) \geq \lfloor (g-3)/2\rfloor$.
\end{corollary}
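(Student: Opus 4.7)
The plan is to verify the two clauses of the definition of $wCM$ directly, with all of the real work being a computation of links. Set $n = \lfloor (g-3)/2\rfloor$. The first clause, $(n-1)$-connectivity of $|K^a(M)|$, is immediate from Theorem \ref{thm:Charney}, since $n - 1 = \lfloor (g-5)/2 \rfloor$. So it remains to estimate the connectivity of the link of an arbitrary $p$-simplex.

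For a $p$-simplex $\sigma = \{e_0,\dots,e_p\} < K^a(M)$, the definition of $K^a$ already identifies $\Lk(\sigma) \cong K^a(M \cap \gen(\sigma)^\perp)$. The key step is to show that $M \cap \gen(\sigma)^\perp \cong H^{\oplus g-p-1}$; then Theorem \ref{thm:Charney} will supply the required connectivity almost automatically. To obtain this identification, I would first note that the pairwise orthogonality of the $e_i(H)$, together with the non-degeneracy of each individual hyperbolic summand, implies that $\gen(\sigma) = \bigoplus_i e_i(H) \cong H^{\oplus p+1}$ is a non-degenerate quadratic submodule of $M$; it is therefore an orthogonal direct summand, yielding
\[
 (M \cap \gen(\sigma)^\perp)\oplus H^{\oplus p+1} \cong H^{\oplus g}.
\]
Then I would apply Corollary \ref{cor:AlgebraicCancellation} a total of $p+1$ times, stripping off one copy of $H$ from each side at each stage.

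The main point requiring care is that the cancellation hypothesis ``$g+1 \geq 5$'' must hold at every step of the iteration, which amounts to the constraint $p \leq g - 5$. This is exactly where the statement could fail if we were careless, so this is what I regard as the one genuine obstacle. Fortunately the condition on $\Lk(\sigma)$ is vacuous (any space is $(-2)$-connected) whenever $p \geq n$, so we only need to handle $p \leq n - 1 = \lfloor (g-5)/2\rfloor$, and for $g \geq 5$ this immediately gives $p \leq g - 5$; the residual small-$g$ cases are either vacuous or reduce to non-emptiness of $K^a(H^{\oplus k})$ for $k \geq 1$, which is witnessed by the standard inclusion.

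Once the identification $\Lk(\sigma) \cong K^a(H^{\oplus g-p-1})$ is established, Theorem \ref{thm:Charney} gives that $|\Lk(\sigma)|$ is $\lfloor (g-p-6)/2 \rfloor$-connected, and a short arithmetic check of the form
\[
 \lfloor (g-p-6)/2 \rfloor + (p+2) \;=\; \lfloor (g+p-2)/2 \rfloor \;\geq\; \lfloor (g-3)/2 \rfloor \;=\; n
\]
(valid for all $p \geq 0$) confirms that the link is $(n-p-2)$-connected, completing the verification.
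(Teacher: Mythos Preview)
Your proposal is correct and follows essentially the same approach as the paper: identify the link of a $p$-simplex with $K^a(H^{\oplus g-p-1})$ via the orthogonal splitting and iterated cancellation, then invoke Theorem~\ref{thm:Charney}. The only organizational difference is that the paper splits into the cases $g-p-1 \geq 4$ (where cancellation applies) versus $g-p-1 < 4$ (where the required connectivity is $\leq -2$ and hence vacuous), whereas you first restrict to the non-vacuous range $p \leq n-1$ and then observe this forces $p \leq g-5$; these amount to the same thing.
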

\begin{proof}
  We prove the statement assuming $g \geq 4$ (it is obvious for $g
  \leq 4$).  Let $\sigma = \{ v_0, \ldots, v_p \} < K^a(H^{\oplus g})$
  be a $p$-simplex, with link $K^a(\gen(e_0, \ldots, e_p)^\perp)$. As
  $$H^{\oplus g} \cong e_0(H) \oplus \cdots \oplus e_p(H) \oplus \gen(e_0, \ldots, e_p)^\perp$$
  by Corollary \ref{cor:AlgebraicCancellation}, $K^a(\gen(e_0, \ldots,
  e_p)^\perp)$ is isomorphic to the complex $K^a(H^{\oplus g-p-1})$,
  as long as $g-p-1 \geq 4$. In this case the link is then
  $\lfloor(g-p-6)/2\rfloor$-connected, but
  \begin{equation*}
    \lfloor (g-p-6)/2\rfloor \geq \lfloor (g-3)/2\rfloor - p-2,
  \end{equation*}
  which proves the claim.  In the case $g-p-1 <4$ we have $\lfloor
  (g-3)/2\rfloor -p-2 \leq \lfloor (1-g)/2\rfloor \leq -2$, so there
  is no condition on the link in this case.
\end{proof}
\section{Topology}

We fix a dimension $d=2n \geq 6$ throughout and write
$$W_g = \#^g S^n \times S^n$$
for the $g$-fold connected sum of $S^n \times S^n$ with itself.  This
is a $2n$-dimensional smooth closed manifold, and we write $W_{g,k}$
for the manifold with boundary obtained by removing the interiors of
$k$ disjoint discs from $W_g$.  (Up to diffeomorphism, the manifold
$W_{g,k}$ does not depend on choices of where to perform connected sum
and which discs to cut out.  We make these choices once and for all,
and the notation $W_{g,k}$ will denote an actual abstract manifold,
with boundary components parametrised by $S^{2n-1}$, rather than a
diffeomorphism class.)  It will be convenient to have available the
following small modification of the manifold $W_{1,1}$.  Let $H$
denote the manifold obtained from $W_{1,1} = S^n \times S^n
-\Int(D^{2n})$ by gluing $[0,1] \times D^{2n-1}$ onto $\partial
W_{1,1}$ along an oriented embedding
\begin{equation*}
  \{1\} \times D^{2n-1} \lra \partial W_{1,1},
\end{equation*}
which we also choose once and for all.  This gluing of course doesn't
change the diffeomorphism type (after smoothing corners), so $H$ is
diffeomorphic to $W_{1,1}$, but contains a standard embedded $[0,1]
\times D^{2n-1} \subset H$. When we discuss embeddings of $H$ into a
manifold with boundary $W$, we shall always insist that $\{0\} \times
D^{2n-1}$ is sent into $\partial W$, and that the rest of $H$ is sent
into the interior of $W$.

We shall also need a \emph{core} $C \subset H$, defined as follows.
Let $x_0 \in S^n$ be a basepoint.  Let $S^n \vee S^n = (S^n \times
\{x_0\}) \cup (\{x_0\} \times S^n) \subset S^n \times S^n$, which we
may suppose is contained in $\Int(W_{1,1})$.  Choose an embedded path
$\gamma$ in $\Int(H)$ from $(x_0, -x_0)$ to $(0,0) \in [0,1] \times
D^{2n-1}$ whose interior does not intersect $S^n \vee S^n$, and whose
image agrees with $[0,1] \times \{0\}$ inside $[0,1] \times D^{2n-1}$,
and let
\begin{equation*}
  C = (S^n \vee S^n) \cup \gamma([0,1]) \cup (\{0\} \times D^{2n-1})
  \subset H.
\end{equation*}
We may choose an isotopy of embeddings $\rho_t : H \to H$, defined for
$t \in [0,\infty)$, which starts at the identity, eventually has image
inside any given neighbourhood of $C$, and which for each $t$ is the
identity on some neighbourhood of $C$.

\begin{definition}\label{defn:K-p}
  Let $W$ be a compact manifold, equipped with a fixed embedding $c:
  [0,1) \times \R^{2n-1} \to W$ such that $c^{-1}(\partial W) = \{0\}
  \times \R^{2n-1}$.
  \begin{enumerate}[(i)]
  \item\label{item:5} Let $K_0(W) = K_0(W,c)$ be the space of pairs $(t,\phi)$,
    where $t \in \R$ and $\phi: H \to W$ is an embedding whose
    restriction to $[0,1) \times D^{2n-1} \subset H$ satisfies
    that there exists an $\epsilon > 0$ such that
    \begin{equation*}
      \phi(s,p) = c(s, p + te_1)
    \end{equation*}
    for all $s < \epsilon$ and all $p \in D^{2n-1}$.  Here, $e_1 \in
    \R^{2n-1}$ denotes the first basis vector.
  \item Let $K_p(W) \subset (K_0(W))^{p+1}$ consist of those tuples
    $((t_0, \phi_0), \dots, (t_p, \phi_p))$ satisfying that $t_0 <
    \dots < t_p$ and that the embeddings $\phi_i$ have disjoint cores, i.e.\ the sets $\phi_i(C)$ are disjoint.
  \item Topologise $K_p(W)$ using the $C^\infty$-topology on the space
    of embeddings and let $K_p^\delta(W)$ be the same set considered
    as a discrete topological space.
  \item The assignments $[p] \mapsto K_p(W)$ and $[p] \mapsto
    K_p^\delta(W)$ define semisimplicial spaces, where the face map
    $d_i$ forgets $(t_i,\phi_i)$.
  \item Let $K^\delta(W)$ be the simplicial complex with vertices
    $K^\delta_0(W)$, and where the (unordered) set $\{(t_0,\phi_0),
    \dots, (t_p,\phi_p)\}$ is a $p$-simplex if, when written with $t_0
    < \dots < t_p$, it satisfies $((t_0, \phi_0), \dots, (t_p,
    \phi_p)) \in K_p^\delta(W)$.
  \end{enumerate}
  We shall often denote a vertex $(t,\phi)$ simply by $\phi$, since
  $t$ is determined by $\phi$.  Since a $p$-simplex of
  $K_\bullet^\delta(W)$ is determined by its (unordered) set of
  vertices, there is a natural homeomorphism $|K_\bullet^\delta(W)| =
  |K^\delta(W)|$.  
\end{definition}

The fibration $S^{n} \to BO(n) \to BO(n+1)$ gives an exact sequence
$$\cdots \lra \pi_{n+1}(BO(n+1)) \overset{\partial}\lra \pi_{n}(S^{n}) = \bZ \overset{\tau}\lra \pi_{n}(BO(n)) \overset{s}\lra \pi_{n}(BO(n+1)) \lra 0$$
and we define $\Lambda_n := \IM(\partial) \subset \bZ$. This can of course be explicitly computed: it is 0 if $n$ is even (the Euler class detects the injectivity of $\tau$), $\bZ$ if $n\in \{1, 3, 7\}$, and $2\bZ$ otherwise, by the Hopf invariant 1 theorem. 

The data $((-1)^n, \Lambda_n)$ is a \emph{form parameter}, in the
sense of Section \ref{sec:algebra}. Following Wall (\cite{Wall62}), we
will now construct from a stably parallelisable, $(n-1)$-connected
$2n$-manifold $W$, a quadratic module having this form parameter. The
first non-zero homotopy group of such a manifold is
$$\pi_{n}(W) \cong H_{n}(W;\bZ).$$
Using the intersection form on the middle homology of $W$, we obtain a bilinear form
$$\lambda : \pi_n(W) \otimes \pi_n(W) \lra \bZ$$
which is $(-1)^n$-symmetric, and non-degenerate by Poincar{\'e}
duality. By a theorem of Haefliger (\cite{Haefliger}), an element $x \in \pi_n(W)$ may be represented by an embedded sphere as long as $n \geq 3$, and this representation is unique up to isotopy as long as $n \geq 4$. Such an embedding has a normal bundle which is stably trivial, so is represented by an element
$$\alpha(x) \in \Ker\left(\pi_n(BO(n)) \overset{s}\to \pi_n(BO)\right) = \bZ/\Lambda_n.$$
This gives a well-defined function $\alpha : \pi_n(W) \to \bZ/\Lambda_n$ (it is well-defined even when $n=3$, as $\bZ/\Lambda_3=\{0\}$).

\begin{lemma}
The data $(\pi_n(W), \lambda, \alpha)$ is a $((-1)^n, \Lambda_n)$-Quadratic module.
\end{lemma}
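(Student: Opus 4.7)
The plan is to verify the two axioms that make $(\pi_n(W), \lambda, \alpha)$ a quadratic module: axiom (i) $\alpha(a x) = a^2 \alpha(x)$ and axiom (ii) $\alpha(x+y) - \alpha(x) - \alpha(y) \equiv \lambda(x,y) \pmod{\Lambda_n}$. The properties of $\lambda$ as a non-degenerate $(-1)^n$-symmetric bilinear form are standard consequences of intersection theory and Poincar\'e duality and are already indicated in the discussion. Well-definedness of $\alpha$ as a function $\pi_n(W) \to \bZ/\Lambda_n$ follows for $n \geq 4$ from Haefliger's theorem (any two embedded representatives of a class are ambient isotopic, hence have isomorphic normal bundles), while for $n = 3$ everything is vacuous because $\bZ/\Lambda_3 = 0$. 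That $\alpha(x)$ lands in $\Ker(s) = \bZ/\Lambda_n$ rather than all of $\pi_n(BO(n))$ uses the hypothesis that $W$ is stably parallelisable, which forces the stable normal bundle of any embedded sphere to be trivial.

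For axiom (ii), the strategy is geometric. I would represent $x$ and $y$ by embedded spheres $S_x, S_y \hookrightarrow W$ in general position, meeting transversely in signed points whose algebraic sum is $\lambda(x,y)$, and form the ambient connected sum $S_x \#_\gamma S_y$ along an embedded arc $\gamma$ whose interior avoids the rest of $S_x \cup S_y$. This produces an immersed $n$-sphere representing $x+y$ whose only self-intersections are those of $S_x$ and $S_y$. The cleanest way to track the invariant is to extend $\alpha$ to framed immersions, defining it as the difference between a given trivialisation of the stable normal bundle and the canonical one coming from the stable parallelisation of $W$; under this extension, connected sum is additive on $\alpha$, while a local normal-form analysis at each transverse double point (or equivalently a Whitney move on pairs of opposite-sign points) shows that each double point contributes $\pm 1$ to the framing-twist invariant according to its sign. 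Summing signed contributions gives the $\lambda(x,y)$ term, and Haefliger's theorem ensures this matches the value of $\alpha$ on any embedded representative of $x+y$.

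For axiom (i), once (ii) is in hand, I would proceed by induction. The bridge is the identity $\lambda(x,x) \equiv 2\alpha(x) \pmod{\Lambda_n}$: for $n$ even, the kernel of $s: \pi_n(BO(n)) \to \pi_n(BO)$ is generated by $[TS^n]$, and the Euler class identifies $\bZ/\Lambda_n = \bZ$ with $2\bZ \subset \bZ$, so $\chi(\nu_x) = \lambda(x,x)$ corresponds to $2\alpha(x)$; for $n$ odd with $\Lambda_n = 2\bZ$, $(-1)^n$-symmetry forces $\lambda(x,x) = 0$ and $2\alpha(x) = 0$ in $\bZ/2$ as well; the remaining cases $n \in \{1,3,7\}$ have $\bZ/\Lambda_n = 0$. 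Given this, axiom (ii) with $y = x$ yields $\alpha(2x) = 4\alpha(x)$, and the same computation gives inductively
\[
\alpha((a+1)x) = \alpha(ax) + \alpha(x) + a\lambda(x,x) = a^2\alpha(x) + \alpha(x) + 2a\alpha(x) = (a+1)^2\alpha(x).
\]
The identity $\alpha(-x) = \alpha(x)$ needed for negative multiples follows from applying axiom (ii) to $y = -x$ together with $\alpha(0) = 0$.

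The main obstacle is the geometric content of axiom (ii): precisely accounting for the effect of each transverse double point on the normal-bundle (equivalently framing) invariant. Working directly with embeddings makes this awkward because Whitney cancellation of double points requires matching signs, so I would prefer to carry out the argument in the enlarged category of framed immersions, where additivity under ambient connected sum is formal and the double-point contribution can be read off from a local model, and then invoke Haefliger's uniqueness statement to transfer the result back to the embedded representatives used in the original definition of $\alpha$.
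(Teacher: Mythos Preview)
The paper's own proof is a bare citation to \cite[Lemma~2]{Wall62}, so there is nothing to compare against beyond Wall's original argument. Your sketch is correct and is essentially that classical proof: verify axiom~(ii) geometrically by tubing together embedded representatives and tracking how the normal bundle changes as one resolves the resulting double points, then deduce axiom~(i) formally from~(ii) together with the identity $\lambda(x,x) \equiv 2\alpha(x) \pmod{\Lambda_n}$, which you correctly justify via the Euler class for $n$ even and via antisymmetry of $\lambda$ for $n$ odd.

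One small imprecision worth flagging: your parenthetical ``or equivalently a Whitney move on pairs of opposite-sign points'' is slightly misleading. A Whitney move is a regular homotopy and therefore preserves the normal bundle class; it cancels a pair of opposite-sign double points without contributing to $\alpha$. The change in $\alpha$ comes instead from the \emph{cusp} homotopies needed to eliminate the remaining double points of like sign (after Whitney cancellation leaves $|\lambda(x,y)|$ of them), each of which alters the normal bundle by a generator of $\Ker(s)$. The net bookkeeping is exactly as you state, so this does not affect the validity of the argument, but the two operations should be distinguished.
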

\begin{proof}
See \cite[Lemma 2]{Wall62}.
\end{proof}

If we start with the manifold $W_{g,1} = \#^g S^n \times S^n \setminus
\Int(D^{2n})$ then the associated quadratic module is isomorphic to $H^{\oplus g}$. Furthermore, given an embedding $e : H
\to W_{g,1}$ we have an induced morphism $E : H \to \pi_n(W_{g,1})$ of
quadratic modules, and disjoint embeddings give orthogonal
morphisms. This defines a map of simplicial complexes
\begin{equation}
  \label{eq:7}
  K^\delta(W_{g,1}) \lra K^a(\pi_n(W_{g,1}), \lambda,\alpha)
\end{equation}
which we will use to compute the connectivity of $\vert
K_\bullet^\delta(W_{g,1})\vert = |K^\delta(W_{g,1})|$.  For brevity we
shall just write $K^\delta \to K^a$ for this map in the proof of the
following result.

\begin{lemma}\label{lemthm:conn-K-delta}
  The space $|K^\delta_\bullet(W_{g,1})|$ is $\lfloor (g-5)/2
  \rfloor$-connected.
\end{lemma}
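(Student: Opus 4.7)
The plan is to show that $|K^\delta_\bullet(W_{g,1})| = |K^\delta(W_{g,1})|$ is $m$-connected for $m = \lfloor (g-5)/2 \rfloor$ by comparing it with the algebraic complex $K^a = K^a(H^{\oplus g})$ via the simplicial map $\rho$ from \eqref{eq:7}. We have $|K^a|$ is $m$-connected by Theorem \ref{thm:Charney}, and $wCM(K^a) \geq m+1$ by Corollary \ref{cor:wCM-Ka}. So given $i \leq m$ and a simplicial map $f : \partial D^{i+1} \to |K^\delta(W_{g,1})|$, the composition $\bar f = \rho \circ f$ extends continuously to $D^{i+1} \to |K^a|$ by $m$-connectivity, and by Theorem \ref{thm:simplex-wise-injective} applied with parameter $n = i+1$ this extension may be refined to a simplicial map $\bar g : D^{i+1} \to K^a$ satisfying the link condition $\bar g(\Lk(w)) \subset \Lk(\bar g(w))$ at every interior vertex $w$. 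It remains to construct a simplicial lift $\tilde g : D^{i+1} \to K^\delta(W_{g,1})$ of $\bar g$ with $\tilde g|_{\partial D^{i+1}} = f$.

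I would build $\tilde g$ by lifting interior vertices one at a time, in an arbitrary order $w_1, \ldots, w_r$. At stage $j$, let $L_j$ denote the set of vertices in $\Lk_{D^{i+1}}(w_j)$ that are either on $\partial D^{i+1}$ or among $w_1, \ldots, w_{j-1}$; these are already lifted. The link condition on $\bar g$ says $\rho(\tilde g(v))$ is orthogonal to $\bar g(w_j)$ in $K^a$ for every $v \in L_j$, while the inductive hypothesis says the embeddings $\{\tilde g(v)\}_{v \in L_j}$ have pairwise disjoint cores. The inductive step then reduces to the following \emph{geometric realization lemma}: given a morphism $E : H \to \pi_n(W_{g,1})$ of quadratic modules and finitely many embeddings $\phi_\alpha : H \hookrightarrow W_{g,1}$ of the form in Definition \ref{defn:K-p} with pairwise disjoint cores, each representing a morphism orthogonal to $E$, there exists an embedding $\phi : H \hookrightarrow W_{g,1}$ representing $E$ whose core is disjoint from every $\phi_\alpha(C)$. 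Applying the lemma and perturbing $t$-values so that all lifted vertices have distinct $t$-coordinates produces the required lift $\tilde g(w_j)$. Since the cores of the lifts over any simplex are then pairwise disjoint, every simplex of $D^{i+1}$ with all vertices lifted maps to a simplex of $K^\delta(W_{g,1})$, completing the induction.

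The hard part is the realization lemma. Its proof rests on Haefliger's embedding theorem (requiring $n \geq 3$) to represent the two generators of $E(H) \subset \pi_n(W_{g,1})$ by embedded $n$-spheres with trivial normal bundles, the triviality of the normal bundles coming from the vanishing of $\alpha$ on $E(e)$ and $E(f)$. One then applies the Whitney trick in the complement of $\bigcup_\alpha \phi_\alpha(C)$, which is simply connected because $W_{g,1}$ is simply connected and each core has codimension at least two, in order to cancel intersections with the existing cores. The orthogonality of $E$ with each $\rho(\phi_\alpha)$ provides precisely the vanishing of the algebraic intersection numbers required to run the Whitney trick, and the dimension hypothesis $2n \geq 6$ is what makes the trick available. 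A regular-neighbourhood construction then thickens the resulting embedded wedge of spheres, together with the arc to the boundary and the standard collar $[0,1] \times D^{2n-1}$, into an embedded copy of $H$ with the prescribed collar behaviour near $\partial W_{g,1}$, as in Definition \ref{defn:K-p}.
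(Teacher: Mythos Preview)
Your proposal is correct and follows essentially the same route as the paper: compose with the map~\eqref{eq:7} to $K^a$, extend and straighten via Theorem~\ref{thm:simplex-wise-injective} using Corollary~\ref{cor:wCM-Ka}, then lift interior vertices one by one using Haefliger's theorem for the embedded spheres, the vanishing of $\alpha$ for trivial normal bundles, and the Whitney trick (enabled by $2n \geq 6$ and simple connectivity) to arrange both the single transverse intersection between the two new spheres and disjointness of the new core from the already-lifted adjacent cores. Your packaging of the inductive step as a ``geometric realisation lemma'' and your explicit invocation of the link condition from Theorem~\ref{thm:simplex-wise-injective} are cosmetic differences; the only point to make slightly more explicit is the Whitney-trick step that reduces the intersection of the two newly embedded spheres to a single point before thickening to the plumbing $W_{1,1}$.
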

\begin{proof}
  Let $k \leq (g-5)/2$ and consider a map $f: S^k \to |K^\delta|$,
  which we may assume is simplicial with respect to some PL
  triangulation of $S^{k} = \partial I^{k+1}$.  By
  Theorem~\ref{thm:Charney}, the composition $\partial I^{k+1} \to
  |K^\delta| \to |K^a|$ is null-homotopic and so extends to a map $g:
  I^{k+1} \to |K^a|$, which we may suppose is simplicial with respect
  to a PL triangulation of $I^{k+1}$ extending the triangulation of
  its boundary.
  
  By Corollary \ref{cor:wCM-Ka}, we have $wCM(K^a) \geq \lfloor
  (g-3)/2\rfloor$. By Theorem~\ref{thm:simplex-wise-injective}, as
  $k+1 \leq \lfloor (g-3)/2 \rfloor$ we can arrange that $g$ is
  simplexwise injective on the interior of $I^{k+1}$.  Then we pick a
  total order on the interior vertices, and inductively pick lifts of
  each vertex to $K^\delta_0$.  In each step, a vertex is given by a
  morphism of quadratic modules $J: H \to \pi_n(W_{g,1})$. The element $J(e)$ is
  represented by a map $x : S^n \to W_{g,1}$ which by Haefliger's
  theorem is representable by an embedding, and as $\alpha(x) =
  \alpha(e)=0$ this embedding has trivial normal bundle. Thus $J(e)$
  can be represented by an embedding $j(e) : S^n \times D^n \to
  W_{g,1}$. Similarly, $J(f)$ can be represented by an embedding $j(f)
  : S^n \times D^n \to W_{g,1}$.

  As $\lambda(J(e), J(f))=1$, these two embeddings have algebraic
  intersection number 1. As $W_{g,1}$ is simply-connected and of
  dimension at least 6, we may use the Whitney trick to isotope these
  embeddings so that their cores $S^n \times \{0\}$ intersect
  transversely in precisely one point, and so obtain an embedding of
  the plumbing of $S^n \times D^n$ and $D^n \times S^n$, which is
  diffeomorphic to $W_{1,1} \subset H$.  To extend to the remaining
  $[0,1]\times D^{2n-1} \subset H$, we first pick an embedding $\{0\}
  \times D^{2n-1} \to \partial W_{g,1}$ disjoint from previous
  embeddings and satisfying condition~(\ref{item:5}) of
  Definition~\ref{defn:K-p}; then extend to an embedding of $[0,1]
  \times \{0\}$ which we thicken to an embedding of $[0,1] \times
  D^{2n-1}$.  Finally, as $J$ is orthogonal to any adjacent vertices
  which have already been lifted, we can use the Whitney trick again
  to isotope $j$ so that its core is disjoint from the cores of all
  previously chosen vertices that are adjacent to it. After applying
  this procedure to all vertices, we obtain a lift of $g$ to a
  null-homotopy of $f$, as required.
\end{proof}

We now make deductions from the fact that
$|K_\bullet^\delta(W_{g,1})|$ is path connected when $g \geq 5$,
similar to those made about $K^a(H^{\oplus g})$ in Section
\ref{sec:algebra}.

\begin{corollary}[Transitivity]\label{cor:GeometricTransitivity}
Let $e_0, e_1 : H \hookrightarrow W_{g,1}$ be embeddings, and $g \geq 5$. Then there is a diffeomorphism $f$ of $W_{g,1}$ which is isotopic to the identity on the boundary and such that $e_1 = f \circ e_0$.
\end{corollary}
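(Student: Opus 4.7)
The plan is to imitate the proof of Corollary~\ref{cor:AlgebraicTransitivity} geometrically, using the connectivity statement of Lemma~\ref{lemthm:conn-K-delta} in place of Theorem~\ref{thm:Charney}. Call $e_0, e_1 \in \Emb(H, W_{g,1})$ \emph{equivalent} if there is a diffeomorphism $f$ of $W_{g,1}$ with $f|_{\partial W_{g,1}}$ isotopic to $\id$ and $e_1 = f\circ e_0$; this is an equivalence relation, and the goal is to exhibit a single equivalence class.

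First I reduce to $K^\delta$-vertices. Any two codimension-zero disks smoothly embedded in $\partial W_{g,1} \cong S^{2n-1}$ are ambient isotopic; extending such an isotopy inward along a boundary collar, and then reparametrizing the collar direction, shows that an arbitrary embedding $e : H \hookrightarrow W_{g,1}$ is equivalent to one satisfying the collar condition of Definition~\ref{defn:K-p}, hence is a $K^\delta$-vertex $(t, \phi)$ for some $t \in \R$.

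The key step is to show that two $K^\delta$-vertices $(t_0, \phi_0), (t_1, \phi_1)$ spanning a $1$-simplex of $K^\delta(W_{g,1})$ are equivalent. The shrinking isotopy $\rho_s : H \to H$ is the identity on a neighbourhood of $C$, and hence on $\{0\}\times D^{2n-1} \subset C$ for every $s$; so each $\phi_i\circ \rho_s$ agrees with $\phi_i$ near the boundary of $W_{g,1}$, and isotopy extension keeping $\partial W_{g,1}$ fixed gives $\phi_i \sim \tilde\phi_i := \phi_i \circ \rho_{s_i}$ for any $s_i\geq 0$. For $s_0, s_1$ sufficiently large, the images $\tilde\phi_i(H)$ lie in arbitrarily small neighbourhoods of the disjoint cores $\phi_i(C)$ and are therefore disjoint. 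The boundary disks $\tilde\phi_i(\{0\}\times D^{2n-1}) = c(\{0\}\times(D^{2n-1} + t_i e_1))$ are contained in a common disk $R\subset c(\{0\}\times \R^{2n-1})\subset \partial W_{g,1}$, and a regular neighbourhood $N$ of $\tilde\phi_0(H) \cup R \cup \tilde\phi_1(H)$ in $W_{g,1}$ is diffeomorphic to the boundary-connected sum of two copies of $H$, hence to $W_{2,1}$. The obvious swap of the two $H$-summands provides a self-diffeomorphism $\sigma_N$ of $N$ with $\sigma_N \circ \tilde\phi_0 = \tilde\phi_1$; arranged to be the identity near $\partial N \cap \Int W_{g,1}$, it extends by the identity to a diffeomorphism $\sigma$ of $W_{g,1}$. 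Its boundary restriction is supported in the disk $R$ and therefore isotopic to $\id$, giving $\tilde\phi_0 \sim \tilde\phi_1$ and, by transitivity, $\phi_0\sim \phi_1$.

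Finally, Lemma~\ref{lemthm:conn-K-delta} makes $K^\delta(W_{g,1})$ path-connected as soon as $g \geq 5$, so any two vertices are joined by a chain of $1$-simplices, and the preceding step places them in a common equivalence class; together with the reduction step this proves the corollary. I expect the technical heart of the argument to be the swap construction: exhibiting the regular neighbourhood $N\cong W_{2,1}$ together with an explicit self-diffeomorphism taking $\tilde\phi_0$ to $\tilde\phi_1$ on the nose, which is the identity near the interior-facing part of $\partial N$ (so that it extends by the identity) and whose boundary action is visibly isotopic to $\id$.
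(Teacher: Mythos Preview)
Your outline matches the paper's proof: reduce to adjacent vertices of $K^\delta(W_{g,1})$ via its connectedness for $g\geq 5$, use $\rho_t$ to pass from disjoint cores to genuinely disjoint images, take a regular neighbourhood, and build a swap diffeomorphism. The paper's only substantive addition is exactly the step you flag as the technical heart: it takes the regular neighbourhood to include all of $\partial W_{g,1}$ (so $V\cong W_{2,2}$ rather than your $W_{2,1}$), realises it concretely as $(S^n\times S^n)\#_{d_0}([0,1]\times S^{2n-1})\#_{d_1}(S^n\times S^n)$ with the two attaching discs related by a rotation $\Gamma\in SO(2n)$, and then uses a path $\gamma$ from $\id$ to $\Gamma$ in $SO(2n)$ to write down the swap $(t,x)\mapsto(t,\gamma(t)x)$ explicitly---this is what makes the diffeomorphism the identity on one boundary sphere and visibly isotopic to the identity on the other.
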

\begin{proof}
  Suppose first that $e_0$ and $e_1$ are disjoint. Let $V$ denote the
  closure of a regular neighbourhood of $e_0(H) \cup e_1(H)
  \cup \partial W_{g,1}$, which is abstractly diffeomorphic to
  $W_{2,2}$ with two standard copies of $H$ embedded in it, both
  connected to the first boundary. It is enough to find a
  diffeomorphism of $W_{2,2}$ which is the identity on the first
  boundary, is isotopic to the identity on the second boundary, and
  sends the first $H$ to the second.

  We give a concrete construction of such a diffeomorphism. Let
  $\Gamma \in SO(2n)$ be the diagonal matrix with entries
  $(-1,-1,1,1,\ldots,1)$. Let $d_0 : D^{2n} \to [0,1] \times S^{2n-1}$
  be a small disc around $(\tfrac{1}{2}, (1,0,\ldots,0))$, so $d_1 =
  (\mathrm{Id}_{[0,1]} \times \Gamma) \cdot d_0$ is a small disc
  around $(\tfrac{1}{2}, (-1,0,\ldots,0))$. We form a manifold by
  connect summing two copies of $S^n \times S^n$ to $[0,1] \times
  S^{2n-1}$ at these two discs,
  \begin{equation*}
    M_{2,2} = (S^n \times S^n) \#_{d_0} ([0,1] \times S^{2n-1}) \#_{d_1}
    (S^n \times S^n).
  \end{equation*}
  There is a diffeomorphism $\varphi$ of $M_{2,2}$ which is given by
  $\mathrm{Id}_{[0,1]} \times \Gamma$ on $[0,1] \times S^{2n-1}
  \setminus (d_0(D^{2n}) \cup d_1(D^{2n}))$, and interchanges the two
  copies of $S^n \times S^n \setminus D^{2n}$. There is an embedded
  copy of $H$ given by the first $S^n \times S^n \setminus D^{2n}$
  along with a thickening of the arc $[\tfrac{1}{2},1] \times
  \{(1,0,\ldots,0)\}$, and $\varphi(H)$ gives another disjoint
  embedded copy of $H$. We have found the required diffeomorphism,
  except that it is not the identity on the boundary $\{0\} \times
  S^{2n-1}$.  To amend this, we replace $\mathrm{Id}_{[0,1]} \times
  \Gamma$ in the above construction with a function of the form $(t,x)
  \mapsto (t,\gamma(t)x)$, where $\gamma : [0,1] \to SO(2n)$ is a path
  which is the identity on $[0,\epsilon]$ and $\Gamma$ on
  $[2\epsilon,1]$ for some small $\epsilon$.

Now suppose that the $e_i$ merely have disjoint cores, and recall the
isotopy of self-embeddings $\rho_t: H \to H$ from before. For $T \gg
0$ the embeddings $e_i \circ \rho_T$ are disjoint, so by the Isotopy
Extension Theorem we find diffeomorphisms $\varphi_i : W_{g,1} \to
W_{g,1}$ (which are isotopic to the identity) such that the embeddings
$\varphi_i \circ e_i$ are disjoint. By the above case we then find a
diffeomorphism $g$ of $W_{g,1}$ such that $(\varphi_1 \circ e_1) = g
\circ (\varphi_0 \circ e_0)$, so $f = \varphi_1^{-1} \circ g \circ
\varphi_0$ gives the diffeomorphism we require.

To prove the general case, when the $e_i$ are not assumed to have
disjoint cores, we use the connectedness of $\vert
K_\bullet^\delta(W_{g,1})\vert$ when $g\geq 5$ and the argument of
Corollary \ref{cor:AlgebraicTransitivity}.
\end{proof}

\begin{corollary}[Cancellation]\label{cor:GeometricCancellation}
Let $M$ be a $2n$-manifold with boundary parametrised by $S^{2n-1}$, and suppose there is a diffeomorphism
$$\varphi : M \# S^n \times S^n \lra W_{g+1, 1}$$
which is the identity on the boundary. Then if $g \geq 4$ there is a diffeomorphism of $M$ with $W_{g, 1}$ which is the identity on the boundary.
\end{corollary}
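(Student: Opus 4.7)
The plan is to mirror the argument of the algebraic cancellation result (Corollary~\ref{cor:AlgebraicCancellation}), using the geometric transitivity result (Corollary~\ref{cor:GeometricTransitivity}) in place of the algebraic one. First I would realise the $S^n \times S^n$ summand in $M \# S^n \times S^n$ as an embedded copy of $H$: connect the point of the interior connected sum to $\partial M$ by an embedded arc and thicken, obtaining an embedding $e_H : H \hookrightarrow M \# S^n \times S^n$ whose boundary disc $\{0\} \times D^{2n-1} \subset H$ lies inside $\partial M$ and whose complement (after smoothing corners) is diffeomorphic rel boundary to $M$ with its original $S^{2n-1}$-parametrisation. The standard decomposition $W_{g+1,1} \cong W_{g,1} \cup_{\{0\} \times D^{2n-1}} H$ provides another embedding $e_{\mathrm{std}} : H \hookrightarrow W_{g+1,1}$, whose complement is diffeomorphic rel boundary to $W_{g,1}$.

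Since $g+1 \geq 5$, Corollary~\ref{cor:GeometricTransitivity} applied to the two embeddings $\varphi \circ e_H$ and $e_{\mathrm{std}}$ of $H$ into $W_{g+1,1}$ yields a diffeomorphism $f$ of $W_{g+1,1}$, isotopic to the identity on $\partial W_{g+1,1}$, with $e_{\mathrm{std}} = f \circ \varphi \circ e_H$. Setting $\Phi = f \circ \varphi$, the composite $\Phi : M \# S^n \times S^n \to W_{g+1,1}$ sends $e_H(H)$ onto $e_{\mathrm{std}}(H)$, hence restricts to a diffeomorphism between their complements. Under the identifications fixed in the first paragraph, this restriction becomes a diffeomorphism $\psi : M \to W_{g,1}$.

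The one imperfection is that $\psi\vert_{\partial M}$ need only be isotopic to $\id$, not equal to it, since $f$ is only isotopic to the identity on $\partial W_{g+1,1}$ while $\varphi$ was already the identity there. To remove this defect I would apply the isotopy extension theorem: choose an isotopy from $\psi\vert_{\partial M}$ to $\id_{\partial M}$ and extend it over a collar of $\partial W_{g,1}$, producing a diffeomorphism $\eta$ of $W_{g,1}$, supported in that collar, whose boundary restriction inverts $\psi\vert_{\partial M}$; then $\eta \circ \psi : M \to W_{g,1}$ is the identity on the boundary, as required.

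The main obstacle I anticipate is the boundary and corner bookkeeping in the first step: one must check that removing $\Int(e_H(H))$ from $M \# S^n \times S^n$ really returns $M$ with its original $S^{2n-1}$-parametrisation, and likewise that $W_{g+1,1} \setminus \Int(e_{\mathrm{std}}(H))$ is $W_{g,1}$ with its standard parametrisation, so that the residual boundary isotopy of $f$ restricts to a well-defined isotopy of $\partial W_{g,1} \cong S^{2n-1}$ that can be trivialised in the collar. Once these identifications are fixed consistently, the argument follows the algebraic template essentially verbatim.
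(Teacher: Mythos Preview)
Your proposal is correct and follows essentially the same approach as the paper: use geometric transitivity to carry the embedded $H$ coming from the $S^n\times S^n$ summand onto the standard one, then identify complements and repair the boundary via isotopy extension. The only cosmetic difference is that the paper excises $\Int(W_{1,1})\subset H$ rather than all of $\Int(H)$, obtaining a diffeomorphism $M\setminus D^{2n}\to W_{g,1}\setminus D^{2n}$ which is automatically the identity on the new boundary sphere, and then caps off with a standard disc; this sidesteps precisely the corner and boundary-parametrisation bookkeeping you flag as the main obstacle.
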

\begin{proof}
This is completely analogous to Corollary \ref{cor:AlgebraicCancellation}. We have the two embeddings $\varphi\vert_H : H \to W_{g+1,1}$ and $e_{g+1} : H \to W_{g+1,1}$, and by Corollary \ref{cor:GeometricTransitivity} there is a diffeomorphism $f$ of $W_{g+1,1}$ isotopic to the identity on the boundary so that $e_{g+1} = f \circ \varphi\vert_H$. We obtain a diffeomorphism $f  \circ \varphi : M \# S^n \times S^n \lra W_{g+1, 1}$ isotopic to the identity on the boundary and which sends $H$ to $H$ identically. In particular it sends $W_{1,1} \subset H$ to itself identically, so after removing the interior of $W_{1,1}$ gives a diffeomorphism
$$M \setminus D^{2n} \lra W_{g,1} \setminus D^{2n}$$
which is isotopic to the identity on the old boundary, and is equal to the identity on the new boundary. We can then fill in the new boundary with a standard disc, and use isotopy extension to make the diffeomorphism be the identity on the remaining boundary.
\end{proof}

In fact, Kreck (\cite[Theorem D]{Kreck}) proved the above cancellation result for
$g \geq 1$ (and $g \geq 0$ when $n$ is odd), but we shall only use the
weaker result in Corollary~\ref{cor:GeometricCancellation}.

Finally, we compare $|K_\bullet^\delta(W_{g,1})|$ and $|K_\bullet(W_{g,1})|$.  The
bisemisimplicial space in Definition~\ref{defn:D-bullet-bullet} below
will be used to leverage the known connectivity of
$|K_\bullet^\delta(W_{g,1})|$ to prove the following theorem, which is
the main result of this section.
\begin{theorem}\label{thm:high-conn}
  The space $|K_\bullet(W_{g,1})|$ is $\lfloor
  (g-5)/2\rfloor$-connected.
\end{theorem}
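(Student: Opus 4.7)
The plan is to bridge $|K_\bullet(W_{g,1})|$ and $|K_\bullet^\delta(W_{g,1})|$ through a common bisemisimplicial resolution, and to induct on $g$. The conclusion is vacuous for $g \leq 4$; I would assume it for all smaller values, fix $g \geq 5$, and work with the bisemisimplicial space $D_{\bullet,\bullet}(W_{g,1})$ (matching the forthcoming Definition \ref{defn:D-bullet-bullet}) whose $(p,q)$-bisimplices are pairs $(\sigma,\tau)$ with $\sigma \in K_p(W_{g,1})$ topologised by $C^\infty$ and $\tau \in K_q^\delta(W_{g,1})$ discrete, subject to pairwise disjointness of the cores of all $p+q+2$ embeddings. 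Face maps act separately in the two directions, yielding augmentations
\begin{equation*}
  |K_\bullet(W_{g,1})| \xleftarrow{\epsilon_1} |D_{\bullet,\bullet}(W_{g,1})| \xrightarrow{\epsilon_2} |K_\bullet^\delta(W_{g,1})|,
\end{equation*}
and the goal will be to show that both are $\lfloor (g-4)/2\rfloor$-connected.

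For $\epsilon_1$, Proposition \ref{prop:connectivity-of-realisation} reduces the task to showing that, for each $p$, the map $|D_{p,\bullet}(W_{g,1})|\to K_p(W_{g,1})$ is $\lfloor (g-p-4)/2\rfloor$-connected. Taking $Y_\bullet = K_\bullet^\delta(W_{g,1})$, $Z = K_p(W_{g,1})$, and $X_\bullet = D_{p,\bullet}$—which is open in $Y_\bullet \times Z$, since for a fixed discrete $\tau$ the disjointness of cores is an open condition on $\sigma\in K_p$—Corollary \ref{cor:serre-microf-connectivity} further reduces matters to fibre connectivity. The fibre over $\sigma$ should, after using the shrinking isotopies $\rho_t$ to push embeddings into arbitrarily small neighbourhoods of their cores, be weakly equivalent to $|K_\bullet^\delta(W_{g,1}\setminus N(\sigma))|$; iterated use of Corollary \ref{cor:GeometricCancellation} identifies the complement diffeomorphically with $W_{g-p-1,1}$, so Lemma \ref{lemthm:conn-K-delta} supplies the required $\lfloor (g-p-6)/2\rfloor$-connectivity. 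The argument for $\epsilon_2$ is entirely symmetric: the fibre of $|D_{\bullet,q}|\to K_q^\delta$ over a vertex $\tau$ should be weakly equivalent to $|K_\bullet(W_{g-q-1,1})|$, which by the inductive hypothesis is $\lfloor (g-q-6)/2\rfloor$-connected. Since $K_q^\delta(W_{g,1})$ is discrete, this fibre connectivity passes directly to a connectivity $\lfloor (g-q-4)/2\rfloor$ of the map, and a second application of Proposition \ref{prop:connectivity-of-realisation} assembles the required bound on $\epsilon_2$.

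Given both bounds, $\epsilon_2$ together with the $\lfloor (g-5)/2\rfloor$-connectivity of $|K_\bullet^\delta(W_{g,1})|$ from Lemma \ref{lemthm:conn-K-delta} forces $|D_{\bullet,\bullet}(W_{g,1})|$ to be $\lfloor (g-5)/2\rfloor$-connected, and $\epsilon_1$ then transfers this to $|K_\bullet(W_{g,1})|$, closing the induction. The main obstacle will be the fibre identification: the disjoint-cores condition still permits embeddings with intersecting images, so one must actually produce weak equivalences between the fibres and $|K_\bullet^\delta(W_{g-p-1,1})|$ or $|K_\bullet(W_{g-q-1,1})|$ using the isotopies $\rho_t$, and must check that these identifications are compatible as the base simplex varies (so that the realisation arguments go through). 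Recognising the diffeomorphism type of the complement is exactly what Corollary \ref{cor:GeometricCancellation} was designed for, but verifying the naturality and uniformity of the resulting weak equivalence—parametrised over $K_p(W_{g,1})$ or the discrete set $K_q^\delta(W_{g,1})$—is the real geometric content of the argument.
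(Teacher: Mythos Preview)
Your proposal is correct in outline but takes a more laborious route than the paper's proof. Both arguments use the same bisemisimplicial space $D_{\bullet,\bullet}$ and the same analysis of $\epsilon_1\colon |D_{\bullet,\bullet}| \to |K_\bullet(W_{g,1})|$ via Corollary~\ref{cor:serre-microf-connectivity}. The key divergence is what happens next.

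You propose to also show that $\epsilon_2\colon |D_{\bullet,\bullet}| \to |K_\bullet^\delta(W_{g,1})|$ is highly connected, which forces you into an induction on $g$: the fibres of $|D_{\bullet,q}| \to K_q^\delta$ must be identified with $|K_\bullet(W_{g-q-1,1})|$, whose connectivity is the inductive hypothesis. The paper avoids this entirely via Lemma~\ref{lem:homotopy-commute}, which observes that $|\epsilon| \simeq |\iota| \circ |\delta|$ as maps $|D_{\bullet,\bullet}| \to |K_\bullet(W_{g,1})|$ (where $\iota$ is the identity on underlying sets). Thus the $\lfloor(g-4)/2\rfloor$-connected map $|\epsilon|$ factors up to homotopy through $|K_\bullet^\delta(W_{g,1})|$, whose $\lfloor(g-5)/2\rfloor$-connectivity is already known from Lemma~\ref{lemthm:conn-K-delta}. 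Surjectivity of $|\epsilon|_*$ on $\pi_k$ for $k\le\lfloor(g-5)/2\rfloor$ then immediately gives $\pi_k(|K_\bullet(W_{g,1})|)=0$. No induction, no analysis of $\epsilon_2$.

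There is also a difference in how the fibres of $\epsilon_1$ are handled. You want to identify the fibre over $\sigma\in K_p$ with $|K_\bullet^\delta(W_{g-p-1,1})|$ via shrinking isotopies and geometric cancellation (Corollary~\ref{cor:GeometricCancellation}); as you note, this is the delicate step, since the disjoint-cores condition does not confine embeddings to a submanifold. The paper sidesteps this: it leaves the fibre $F(z)$ as a subcomplex of $K^\delta(W_{g,1})$, maps it to the algebraic complex $K^a(\pi_n(W_z))$, uses \emph{algebraic} cancellation (Corollary~\ref{cor:AlgebraicCancellation}) to identify the target as $K^a(H^{\oplus g-p-1})$, and then re-runs the lifting argument of Lemma~\ref{lemthm:conn-K-delta}. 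This avoids having to produce any parametrised weak equivalence of fibres.

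What your approach buys is conceptual symmetry between the two augmentations; what the paper's approach buys is brevity and the elimination of both the induction and the fibre-identification you flagged as the main obstacle.
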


\begin{definition}\label{defn:D-bullet-bullet}
  With $W$ and $c$ as in Definition~\ref{defn:K-p}, let $D_{p,q} =
  K_{p+q+1}(W)$, topologised as a subspace of $K_p(W) \times
  K^\delta_q(W)$.  This is a bisemisimplicial space, equipped with
  augmentations
  \begin{align*}
    D_{p,q} &\overset{\epsilon}\lra K_p(W)\\
    D_{p,q} &\overset{\delta}\lra K_q^\delta(W).
  \end{align*}
\end{definition}
\begin{lemma}\label{lem:homotopy-commute}
  Let $\iota: K_\bullet^\delta(W) \to K_\bullet(W)$ denote the
  identity function.  Then
  \begin{equation*}
    |\iota| \circ |\delta| \simeq |\epsilon|: |D_{\bullet,
      \bullet}| \lra |K_\bullet(W)|.
  \end{equation*}
\end{lemma}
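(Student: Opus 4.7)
The plan is to write down an explicit straight-line homotopy on the level of bisemisimplicial realisations. The key observation is that, as sets, $D_{p,q}$ is literally $K_{p+q+1}(W)$, so each $x \in D_{p,q}$ determines a $(p+q+1)$-simplex in the semisimplicial space $K_\bullet(W)$. The maps $\epsilon$ and $\delta$ correspond to restricting that simplex to its first $p+1$ and its last $q+1$ vertices respectively, so I expect to interpolate by sliding the barycentric weight from the first block of coordinates to the second.

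Concretely, I would define
\[
\phi : [0,1] \times |D_{\bullet,\bullet}| \lra |K_\bullet(W)|
\]
by sending a representative $(x; \vec t, \vec s) \in D_{p,q} \times \Delta^p \times \Delta^q$ and a parameter $u \in [0,1]$ to
\[
\phi\bigl(u, [x;\vec t,\vec s]\bigr) \;=\; \bigl[x;\, (1-u)t_0, \ldots, (1-u)t_p,\, u s_0, \ldots, u s_q\bigr],
\]
where on the right the tuple lies in $\Delta^{p+q+1}$ and $x$ is viewed as a $(p+q+1)$-simplex of $K_\bullet(W)$. Continuity follows because the inclusion $D_{p,q} \hookrightarrow K_{p+q+1}(W)$ is continuous (the $K_q^\delta(W)$-factor carries the discrete topology, which is finer than the $C^\infty$-topology).

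The main thing to check is well-definedness under the bisemisimplicial relations on $|D_{\bullet,\bullet}|$. In the $p$-direction, the relation $(d_i x; \vec t_{\hat\imath}, \vec s) \sim (x; \vec t, \vec s)$ for $t_i=0$ (with $i \le p$) is compatible with $\phi$ because the $i$-th barycentric coordinate of the target simplex vanishes, so the standard relation in $|K_\bullet(W)|$ identifies the output with $[d_i x; \ldots]$; the face map $d_i$ on $D_{p,q}$ agrees with the $i$-th face map of $K_{p+q+1}(W)$. The analogous check in the $q$-direction uses that the face map $d_j$ on $D_{p,q}$ in the $q$-coordinate corresponds to the face map $d_{p+1+j}$ on $K_{p+q+1}(W)$, so again the barycentric coordinate that goes to zero matches the index of the face map applied to $x$.

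Finally, the two endpoints realise the desired maps. At $u = 0$ the last $q+1$ barycentric coordinates vanish, so the defining relations of $|K_\bullet(W)|$ collapse the formula to $[\epsilon(x); \vec t]$, which is $|\epsilon|$. At $u = 1$ the first $p+1$ coordinates vanish, giving $[\delta(x);\vec s]$; since $\iota$ is the identity on underlying sets, this is precisely $(|\iota|\circ|\delta|)\bigl([x;\vec t,\vec s]\bigr)$. No step is really hard here: the only thing that requires care is the bookkeeping of face maps showing that $\phi$ descends to $|D_{\bullet,\bullet}|$, and that rests on the straightforward compatibility between the face maps of $D_{p,q}$ and those of $K_{p+q+1}(W)$ noted above.
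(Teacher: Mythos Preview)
Your proof is correct and takes essentially the same approach as the paper: the paper also writes down the straight-line interpolation $(r,s,t,(x,y)) \mapsto ((rs,(1-r)t),(x,\iota y))$ on $[0,1]\times \Delta^p\times\Delta^q\times D_{p,q}$ and observes these glue to the desired homotopy. Your write-up in fact spells out more carefully the compatibility with the face relations and the continuity of $D_{p,q}\hookrightarrow K_{p+q+1}(W)$, which the paper leaves implicit.
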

\begin{proof}
  For each $p$ and $q$ there is a homotopy
  \begin{align*}
    [0,1] \times \Delta^p \times \Delta^q \times D_{p,q} \lra
    \Delta^{p+q+1} \times K_{p+q+1}(W)\\
    (r,s,t,x,y)) \longmapsto ((rs,(1-r)t),(x,\iota y)),
  \end{align*}
  where we write $(x,y) \in D_{p,q} \subset K_p(W)\times
  K_q^\delta(W)$ and $(x,\iota y) \in K_{p+q+1}(W) \subset K_p(W)
  \times K_q(W)$ and $(rs,(1-r)t) = (rs_0,\dots, rs_p,(1-r)t_0, \dots,
  (1-r)t_q) \in \Delta^{p+q+1}$.  These homotopies glue to a homotopy
  $[0,1] \times |D_{\bullet,\bullet}| \to |K_\bullet(W)|$ which
  starts at $|\iota| \circ |\delta|$ and ends at $|\epsilon|$.
\end{proof}
\begin{proof}[Proof of Theorem \ref{thm:high-conn}]

  We will apply Corollary~\ref{cor:serre-microf-connectivity} with $Z
  = K_p(W_{g,1})$, $Y_\bullet = K_\bullet^\delta(W_{g,1})$ and
  $X_\bullet = D_{p,\bullet}$.  For $z = ((t_0, \phi_0), \dots, (t_p,
  \phi_p)) \in K_p(W_{g,1})$, we shall write $W_z \subset W$ for the
  complement of the $\phi_i(C)$.  The realisation of the
  semisimplicial subset $X_\bullet(z) \subset Y_\bullet =
  K_\bullet^\delta(W_{g,1})$ is homeomorphic to the full subcomplex
  $F(z) \subset K^\delta(W_{g,1})$ on those $(t,\phi)$ such that
  $\phi(C) \subset W_z$ and $t > t_p$.  The map of simplicial
  complexes~\eqref{eq:7} restricts to a map
  \begin{equation*}
    F(z) \lra K^a(\pi_n(W_z), \lambda, \alpha).
  \end{equation*}
  By Corollary \ref{cor:AlgebraicCancellation} the target is
  isomorphic to $K^a(H^{\oplus g-p-1})$ which is $\lfloor
  (g-p-6)/2\rfloor$-connected by Theorem~\ref{thm:Charney}, and the
  argument of Lemma \ref{lemthm:conn-K-delta} shows that $F(z)$ is
  too.  (Corollary~\ref{cor:AlgebraicCancellation} only applies when
  $g-p-1 \geq 4$, but the statement is vacuously true for $g-p\leq
  5$.)  By Corollary~\ref{cor:serre-microf-connectivity}, the map
  $|\epsilon|: |D_{p,\bullet}| \to K_p(W_{g,1})|$ is $\lfloor
  (g-p-4)/2\rfloor$-connected and since $\lfloor (g-p-4)/2\rfloor \geq
  \lfloor (g-4)/2\rfloor - p$, we deduce by
  Proposition~\ref{prop:connectivity-of-realisation} that the map
  $|D_{\bullet,\bullet}| \to |K_\bullet(W_{g,1})|$ is $\lfloor (g
  -4)/2 \rfloor$-connected.  But up to homotopy it factors through the
  $\lfloor (g-5)/2\rfloor$-connected space
  $|K_\bullet^\delta(W_{g,1})|$, and therefore $|K_\bullet(W_{g,1})|$
  is $\lfloor (g-5)/2\rfloor$-connected too.
\end{proof}

Finally, define the sub-semisimplicial space $\overline{K}_\bullet(W_{g,1})
\subset K_\bullet(W_{g,1})$ whose $p$-simplices are tuples of disjoint
embeddings.  (Recall that in $K_\bullet(W_{g,1})$ we only ask for the
embeddings to have disjoint cores.)

\begin{corollary}
  The space $|\overline{K}_\bullet(W_{g,1})|$ is $\lfloor
  (g-5)/2\rfloor$-connected.
\end{corollary}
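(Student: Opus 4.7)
My plan is to show that the levelwise inclusion $\iota_p: \overline{K}_p(W_{g,1}) \hookrightarrow K_p(W_{g,1})$ is a weak equivalence for every $p$, and then invoke Proposition~\ref{prop:connectivity-of-realisation} to propagate the $\lfloor(g-5)/2\rfloor$-connectivity of $|K_\bullet(W_{g,1})|$ from Theorem~\ref{thm:high-conn} through to $|\overline{K}_\bullet(W_{g,1})|$.

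The key tool will be the shrinking isotopy $\rho_t: H \to H$ introduced before Definition~\ref{defn:K-p}. Two of its properties are crucial. First, since $\rho_t$ is the identity on a neighbourhood of the core $C$, we have $\rho_t(C) = C$, so post-composing each embedding by $\rho_t$ preserves its core $\phi(C)$ and therefore defines a self-map $\sigma_t$ of the semisimplicial space $K_\bullet(W_{g,1})$. Second, since $\rho_t(H) \subset H$, the operation $\sigma_t$ also carries the subspace $\overline{K}_p(W_{g,1})$ of tuples with disjoint images into itself. In addition I will verify that for any continuous $f: D^k \to K_p(W_{g,1})$ with compact domain, there exists $T_0 \geq 0$ so large that $\sigma_T \circ f$ lands in $\overline{K}_p(W_{g,1})$ for every $T \geq T_0$.

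With these ingredients in place, to show $\iota_p$ is a weak equivalence I would let $f: (D^k, \partial D^k) \to (K_p(W_{g,1}), \overline{K}_p(W_{g,1}))$ be any relative map and produce a rel-$\partial D^k$ homotopy to a map landing in $\overline{K}_p$. Since $\overline{K}_p$ is open in $K_p$, pick an open neighbourhood $U$ of $\partial D^k$ with $f(U) \subset \overline{K}_p$, a smaller closed neighbourhood $\bar V \subset U$ of $\partial D^k$, and a continuous bump $\chi: D^k \to [0,1]$ with $\chi \equiv 0$ on $\bar V$ and $\chi \equiv 1$ on $D^k \setminus U$. The homotopy will be
\begin{equation*}
  F(u, s) = \sigma_{u T_0 \chi(s)}(f(s)), \qquad u \in [0,1], \; s \in D^k.
\end{equation*}
It starts at $f$ (since $\sigma_0 = \id$), stays inside $K_p$ throughout because $\sigma_\bullet$ preserves $K_p$, and is stationary on $\partial D^k \subset \bar V$. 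At $u = 1$ it lands in $\overline{K}_p$: where $\chi(s) = 1$ the value is $\sigma_{T_0}(f(s)) \in \overline{K}_p$ by choice of $T_0$, and where $\chi(s) < 1$ we have $s \in U$ so $f(s) \in \overline{K}_p$, and $\sigma_\bullet$ preserves $\overline{K}_p$.

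The main, albeit modest, obstacle I anticipate is the uniform shrinking claim: ensuring that a single $T_0$ works for the entire compact family $f(D^k)$. For this I would observe that the pairwise distances $\dist(\phi_i^s(C), \phi_j^s(C))$ in $W_{g,1}$ are continuous and strictly positive functions of $s$, hence bounded below by some $d_0 > 0$ on the compact set $D^k$; then, by uniform continuity of the map $(s, T) \mapsto \phi_i^s \circ \rho_T$ on compact sets together with the convergence $\rho_T(H) \to C$ as $T \to \infty$, the sets $\phi_i^s(\rho_T(H))$ will be contained in the $d_0/2$-neighbourhood of $\phi_i^s(C)$ uniformly in $s$, $i$, and $T \geq T_0$, making the images pairwise disjoint.
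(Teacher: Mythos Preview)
Your proposal is correct and follows essentially the same approach as the paper: both argue that the inclusion $\overline{K}_\bullet(W_{g,1}) \hookrightarrow K_\bullet(W_{g,1})$ is a levelwise weak equivalence by precomposing with the shrinking isotopy $\rho_t$, then invoke the connectivity of $|K_\bullet(W_{g,1})|$ from Theorem~\ref{thm:high-conn}. The paper's proof is the one-line version of what you wrote; your compactness argument for the uniform $T_0$ and the bump-function homotopy are exactly the details one would supply. One small slip: you write ``post-composing each embedding by $\rho_t$'' but your formula $\phi_i^s \circ \rho_T$ is \emph{pre}composition, which is indeed what is needed (and what the paper says).
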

\begin{proof}
  Precomposing with the isotopy $\rho_t$, any tuple of embeddings with
  disjoint cores eventually become disjoint.  It follows that the
  inclusion is a levelwise weak equivalence.
\end{proof}

\section{Resolutions of moduli spaces}

We now use the high connectivity of $|\overline{K}_\bullet(W_{g,1})|$ to prove
Theorem~\ref{thm:main}.

\subsection{A semisimplicial resolution}
\label{sec:semis-resol}

We shall define a semisimplicial resolution of the moduli space
$\MM_g$, meaning a semisimplicial space $X_\bullet$ with an
augmentation $X_\bullet \to \MM_g$ such that the map $|X_\bullet|
\to \MM_g$ is highly connected (see
Proposition~\ref{prop:resolution} below for the precise meaning).

Before defining $X_\bullet$ in Definition~\ref{defn:X-p}, we
recall that the topology of $\MM_g(\R^N)$ is defined by the
homeomorphism
\begin{equation*}
  \MM_g(\R^N) = \Emb^\partial (W_{g,1}, [0,\infty) \times
  \R^N)/\Diff^\partial(W_{g,1}),
\end{equation*}
where $\Emb^\partial$ denotes the space of embeddings with fixed
behaviour near the boundary (in terms of a collar $[0,1)
\times \partial W_{g,1} \to W_{g,1}$) and $\Diff^\partial$ denotes the
space of diffeomorphisms which fix a neighbourhood of the boundary
pointwise.  It is well known (see e.g.\ \cite{MR613004}) that the
quotient map from the embedding space is a principal
$\Diff^\partial(W_{g,1})$-bundle.
\begin{definition}\label{defn:X-p}
  Pick once and for all a coordinate patch $c_0: \R^{2n-1} \to
  S^{2n-1}$.  This choice induces for any $W \in \MM_g$ a germ of an
  embedding $[0,1) \times \R^{2n-1} \to W$ as in Definition
  \ref{defn:K-p}, and we let $X_p(\R^N)$ be the space of pairs
  $(W,\phi)$ where $W \in \MM_g(\R^N)$ and $\phi \in
  \overline{K}_p(W)$, topologised as
  \begin{equation*}
    X_p(\R^N) = (\Emb^\partial(W_{g,1},[0,\infty)\times\R^N) \times
    \overline{K}_p(W_{g,1}))/\Diff^\partial(W_{g,1}).
  \end{equation*}
  This makes $X_\bullet(\R^N)$ into a semisimplicial space augmented over
  $\MM_g(\R^N)$.  By the local triviality of the quotient map defining
  the topology on $\MM_g(\R^N)$, the augmentation $X_\bullet(\R^N)\to
  \MM_g(\R^N)$ is locally trivial with fibres
  $\overline{K}_\bullet(W_{g,1})$.
\end{definition}
\begin{proposition}\label{prop:resolution}
  The map $|X_\bullet(\R^N)| \to \MM_g(\R^N)$ induced by the
  augmentation is $\lfloor (g-3)/2\rfloor$-connected.
\end{proposition}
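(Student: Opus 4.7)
The plan is to exhibit the augmentation $|X_\bullet(\R^N)| \to \MM_g(\R^N)$ as a fibre bundle with fibre $|\overline{K}_\bullet(W_{g,1})|$, and then deduce the asserted connectivity from that of the fibre, which was established in the preceding Corollary. Specifically, Definition~\ref{defn:X-p} presents each $X_p(\R^N)$ as the Borel construction
\begin{equation*}
  X_p(\R^N) = \Emb^\partial(W_{g,1},[0,\infty)\times\R^N)\times_{\Diff^\partial(W_{g,1})} \overline{K}_p(W_{g,1}),
\end{equation*}
with face maps induced by the $\Diff^\partial(W_{g,1})$-equivariant face maps on $\overline{K}_\bullet(W_{g,1})$; these assemble into a semisimplicial space over the constant semisimplicial space $\MM_g(\R^N)$.

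To see that realisation produces a genuine fibre bundle, I would choose a local trivialisation of the principal $\Diff^\partial(W_{g,1})$-bundle $\Emb^\partial \to \MM_g(\R^N)$ over an open set $U \subset \MM_g(\R^N)$. Over $U$, each $X_p(\R^N)$ becomes homeomorphic to $U \times \overline{K}_p(W_{g,1})$, and since geometric realisation (in compactly generated spaces) commutes with products by a fixed space, the preimage of $U$ in $|X_\bullet(\R^N)|$ is identified with $U \times |\overline{K}_\bullet(W_{g,1})|$. The transition functions between two such local trivialisations are induced by those of the principal bundle through the action of $\Diff^\partial(W_{g,1})$ on $|\overline{K}_\bullet(W_{g,1})|$, and are in particular continuous. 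Hence $|X_\bullet(\R^N)| \to \MM_g(\R^N)$ is a fibre bundle with fibre $|\overline{K}_\bullet(W_{g,1})|$.

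By the preceding Corollary this fibre is $\lfloor (g-5)/2 \rfloor$-connected, so the long exact sequence of the fibration shows that the augmentation is $(\lfloor (g-5)/2 \rfloor + 1)$-connected. Since $\lfloor (g-5)/2 \rfloor + 1 = \lfloor (g-3)/2 \rfloor$, this yields the claimed bound. The only mildly delicate step is the compatibility of geometric realisation with the local product structure, but once one works throughout in a convenient category of topological spaces this is routine and I do not expect any serious obstacle here.
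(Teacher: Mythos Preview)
Your proposal is correct and follows essentially the same approach as the paper: the paper's proof simply states that the map is a locally trivial fibre bundle (after passing to compactly generated spaces) with fibre $|\overline{K}_\bullet(W_{g,1})|$, invokes the $\lfloor (g-5)/2\rfloor$-connectivity of this fibre, and concludes via the long exact sequence in homotopy. Your write-up just fills in the details of the local triviality argument and makes the arithmetic $\lfloor (g-5)/2\rfloor + 1 = \lfloor (g-3)/2\rfloor$ explicit.
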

\begin{proof}
  The map is a locally trivial fibre bundle (at least after replacing
  $\MM_g(\R^N)$ with a compactly generated space) with fibre
  $|\overline{K}_\bullet(W_{g,1})|$ which is $\lfloor
  (g-5)/2\rfloor$-connected, so the claim follows from the long exact
  sequence in homotopy groups.
\end{proof}

The direct limit of $X_p(\R^N)$ as $N \to \infty$ shall be denoted
$X_p$.  These form a semisimplicial space augmented over $\MM_g$, and
the proposition implies that $|X_\bullet| \to \MM_g$ is also $\lfloor
(g-3)/2\rfloor$-connected.  Next we describe the homotopy type of each
space $X_p$.  For example, $X_0$ is the moduli space of manifolds
diffeomorphic to $W_{g,1}$, equipped with an embedding of $H \approx
W_{1,1}$.  Using cancellation
(Corollary~\ref{cor:GeometricCancellation}), it is not hard to
convince oneself that this is weakly equivalent to $\MM_{g-1}$ (for
example, by thinking about the functors classified by the two spaces).
More generally $X_p$ is weakly equivalent to $\MM_{g-p-1}$, but we
need to be precise about the map inducing the homotopy equivalence.

Let $S_1 \subset [0,1] \times \R^{2n}$ be the manifold obtained from
the cylinder $C = [0,1] \times S^{2n-1}$ by forming the connected sum
with an embedded $S^n \times S^n$ along a small disk in $C$.  (For
example, we could embed $S^n \times S^n$ as the boundary of a tubular
neighbourhood of an embedding $S^n \to [0,1] \times \R^{2n}$, although
the precise choice does not matter.)  This comes with a canonical
embedding $W_{1,1} \to S_1$, and we pick an extension of this to an
embedding of $H = W_{1,1} \cup ([0,1] \times D^{2n-1})$, giving an
element $\phi_0 \in \overline{K}_0(S_1)$.  (For defining $\overline{K}_\bullet(S_1)$ we use the
same coordinate patch $c_0: \R^{2n-1} \to \{0\} \times S^{2n-1}
\subset \partial S_1$ as in Definition~\ref{defn:X-p}.)  Similarly,
the $p$-fold concatenation $S_p \subset [0,p] \times \R^{2n}$ of $S_1$
with itself has $p$ canonical embeddings of $W_{1,1}$ which we may
extend to $p$ disjoint embeddings of $H$, giving an element $(\phi_0,
\dots, \phi_{p-1}) \in \overline{K}_{p-1}(S_p)$.  We shall later be slightly more
precise about these choices, but for any such choice we get a map
\begin{align}\label{eq:5}
  \begin{aligned}
    \MM_{g-p} & \lra X_{p-1}\\
    W & \longmapsto (S_p \cup (pe_1+ W), (\phi_0, \dots, \phi_{p-1})),
  \end{aligned}
\end{align}
and the following holds.
\begin{proposition}\label{prop:htpy-type-X-p}
  For $g-p \geq 4$, the map~\eqref{eq:5} is a weak equivalence.
\end{proposition}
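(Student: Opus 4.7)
The plan is to identify both $\MM_{g-p}$ and $X_{p-1}$ as classifying spaces of topological groups linked by a group isomorphism, and to verify that \eqref{eq:5} realises the resulting homotopy equivalence. Since $\Emb^\partial(W_{g,1}, [0,\infty)\times\R^\infty)$ is contractible and the quotient map to $\MM_g$ is locally trivial, we have $\MM_{g-p} \simeq B\Diff^\partial(W_{g-p,1})$, while the local triviality from Definition \ref{defn:X-p} gives
\[
  X_{p-1} \simeq \overline{K}_{p-1}(W_{g,1})_{h\Diff^\partial(W_{g,1})}.
\]
Fix a basepoint $\sigma_0 = (\phi^{\mathrm{std}}_0, \ldots, \phi^{\mathrm{std}}_{p-1}) \in \overline{K}_{p-1}(W_{g,1})$ coming from $S_p$ via the canonical $p$ embeddings of $H$ used in \eqref{eq:5}, and let $G_{\sigma_0} \subset \Diff^\partial(W_{g,1})$ denote its stabiliser, consisting of diffeomorphisms $f$ with $f \circ \phi^{\mathrm{std}}_i = \phi^{\mathrm{std}}_i$ for all $i$. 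The aim is to show $X_{p-1} \simeq BG_{\sigma_0} \simeq B\Diff^\partial(W_{g-p,1}) \simeq \MM_{g-p}$, with the composite equivalence being induced by \eqref{eq:5}.

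For the first equivalence I would verify that $\Diff^\partial(W_{g,1})$ acts transitively on $\pi_0 \overline{K}_{p-1}(W_{g,1})$ and that the orbit map $f \mapsto f \cdot \sigma_0$ is a Serre fibration (the latter via the parametrised isotopy extension theorem). Transitivity is proved by induction on $p$, paralleling Corollary \ref{cor:AlgebraicTransitivity}: the case $p = 1$ is Corollary \ref{cor:GeometricTransitivity}, requiring $g \geq 5$; for the inductive step, after moving the first embedding to $\phi^{\mathrm{std}}_0$, the remaining $p-1$ embeddings are disjoint from $\phi^{\mathrm{std}}_0(H)$ and lie in its complement, which by Corollary \ref{cor:GeometricCancellation} is diffeomorphic to $W_{g-1,1}$ (using $g \geq 5$), so the inductive hypothesis applies since $(g-1)-(p-1) = g - p \geq 4$. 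Combined with the fibration property, this produces $X_{p-1} \simeq BG_{\sigma_0}$.

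For the second equivalence I would establish an isomorphism of topological groups $G_{\sigma_0} \cong \Diff^\partial(W_{g-p,1})$ by restriction and extension. An element $f \in G_{\sigma_0}$ is pointwise the identity on $\bigcup_i \phi^{\mathrm{std}}_i(H)$ and hence on a neighbourhood of every boundary component of the complement, so $f$ restricts to an element of $\Diff^\partial$ of the complement; conversely, any diffeomorphism of the complement in $\Diff^\partial$ extends by the identity. From the explicit construction of $S_p$ as a $p$-fold concatenation of $S_1$'s, removing $\bigcup_i \phi^{\mathrm{std}}_i(H)$ from $S_p \cup (pe_1 + W)$ leaves the translated $W$ joined to a cylinder on $S^{2n-1}$, canonically diffeomorphic to $W$ itself. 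Under this identification, the map \eqref{eq:5} corresponds precisely to the extension-by-identity homomorphism $\Diff^\partial(W_{g-p,1}) \to G_{\sigma_0}$, which is inverse to the restriction isomorphism.

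The main obstacle is in the first step: the delicate part is combining transitivity on $\pi_0$ with the Serre fibration property of the orbit map, in order to identify the homotopy orbit with the classifying space of the actual stabiliser. The fibration property is standard but requires careful application of the parametrised isotopy extension theorem in this setting. Once both are in place, the transitivity argument is a direct geometric analogue of Corollary \ref{cor:AlgebraicTransitivity}, and the dimension hypotheses align exactly with the assumption $g - p \geq 4$; the identification of $G_{\sigma_0}$ with $\Diff^\partial(W_{g-p,1})$ and its compatibility with \eqref{eq:5} then amount to unwinding the definition of $S_p$.
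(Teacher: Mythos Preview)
Your approach is sound (with one small imprecision noted below) but takes a different route from the paper. The paper avoids the group-action machinery entirely: it considers the restriction map from $X_{p-1}$ to the space of embeddings of $\{0,\dots,p-1\} \times H$ into $[0,\infty) \times \R^\infty$, observes that this is a Serre fibration with contractible target, and hence identifies $X_{p-1}$ with a fibre --- namely the subspace of $\MM_g$ consisting of manifolds containing a fixed copy of $S_p$. Cancellation (Corollary~\ref{cor:GeometricCancellation}) then identifies this subspace with $\MM_{g-p}$. This sidesteps the inductive transitivity argument and uses only cancellation; in particular Corollary~\ref{cor:GeometricTransitivity} is not invoked. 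Your approach, via transitivity of the $\Diff^\partial(W_{g,1})$-action on $\pi_0\overline{K}_{p-1}(W_{g,1})$ and identification of the stabiliser, is the more classical orbit--stabiliser method and works equally well, at the cost of a longer chain of reductions and an appeal to both Corollary~\ref{cor:GeometricTransitivity} and Corollary~\ref{cor:GeometricCancellation}.

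One point to correct: the stabiliser $G_{\sigma_0}$ is not literally isomorphic to $\Diff^\partial(W_{g-p,1})$ as a topological group. An element of $G_{\sigma_0}$ is the identity on each $\phi_i^{\mathrm{std}}(H)$, hence on the boundary of the complement, but need not be the identity on a \emph{neighbourhood} of that boundary inside the complement; your clause ``hence on a neighbourhood of every boundary component of the complement'' is not justified. The extension-by-identity map $\Diff^\partial(W_{g-p,1}) \to G_{\sigma_0}$ is therefore only an inclusion, not an isomorphism. A standard collar-shrinking argument shows it is a weak homotopy equivalence, which is all you need.
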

\begin{proof}
  There is a restriction map from $X_{p-1}(\R^N)$ to the space of
  embeddings (with fixed behaviour on part of the boundary) of $\{0,
  \dots, p-1\} \times H$ into $[0,\infty) \times \R^N$.  The
  restriction map is a Serre fibration and in the limit $N=\infty$ the
  target is contractible, so $X_{p-1}$ is weakly equivalent to any
  fibre of that fibration, and hence to the subspace of $\MM_g$ which
  consists of manifolds containing the image of the embeddings
  $\phi_0$, \dots, $\phi_{p-1}$.  If we let $A \subset S_p$ denote the
  union of these images and a collar neighbourhood of $\{0\} \times
  S^{2n-1}$, then the inclusion $A \to S_p$ is an isotopy equivalence.
  It follows that $X_{p-1}$ is weakly equivalent to the subspace of
  $\MM_g$ consisting of manifolds containing $S_p$, but by the
  cancellation theorem (Corollary~\ref{cor:GeometricCancellation}),
  this space is in turn equivalent to $\MM_{g-p}$.
\end{proof}
Recall that ``the'' stabilisation map $\MM_{g-1} \to \MM_g$ was
defined by gluing a submanifold of $[0,1] \times \R^N$ diffeomorphic
to $S^n \times S^n$ with two discs cut out.  For large $N$, the space
of such submanifolds is path connected (although not contractible!) so
we get a well defined homotopy class.  To be precise, we shall use the
same submanifold $S_1 \subset [0,1] \times \R^{2n} \subset [0,1]
\times \R^N$ as above.  Together with
Proposition~\ref{prop:htpy-type-X-p}, our next result says that the
last face map of $X_\bullet$ is a model for the stabilisation.
\begin{proposition}\label{prop:d-p-is-stabilisation}
  The following diagram is commutative for $p \geq 0$
  \begin{equation*}
    \xymatrix{
      {\MM_{g-p-1}} \ar[r] \ar[d] & {\MM_{g-p}}\ar[d]\\
      X_{p} \ar[r]_{d_p} & X_{p-1},
    }
  \end{equation*}
  where the vertical maps are given by~\eqref{eq:5} and the top
  horizontal map is the stabilisation map.
\end{proposition}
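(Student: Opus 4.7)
The plan is to chase the element $W \in \MM_{g-p-1}$ around the diagram and verify that the two resulting elements of $X_{p-1}$ coincide on the nose, once we make the deferred choice of the data $(\phi_0,\dots,\phi_{p-1}) \in \overline{K}_{p-1}(S_p)$ in a compatible way for all $p$. Unravelling the definitions, going down then right sends $W$ to
\begin{equation*}
\bigl( S_{p+1} \cup ((p+1)e_1 + W),\, (\phi_0^{(p+1)},\dots,\phi_{p-1}^{(p+1)}) \bigr),
\end{equation*}
where I have written $\phi_i^{(q)}$ for the $i$-th embedding used in the map $\MM_{g-q} \to X_{q-1}$ of~\eqref{eq:5}, and where the vertex $\phi_p^{(p+1)}$ has been discarded by $d_p$. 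Going right then down, the stabilisation glues $S_1 \subset [0,1]\times \R^{2n}$ onto $W$ (after shifting $W$ by $e_1$), and then~\eqref{eq:5} applied to the resulting manifold in $\MM_{g-p}$ produces
\begin{equation*}
\bigl( S_p \cup (pe_1 + S_1) \cup ((p+1)e_1 + W),\, (\phi_0^{(p)},\dots,\phi_{p-1}^{(p)}) \bigr).
\end{equation*}

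The key observation is that the underlying submanifolds of $[0,\infty)\times\R^N$ agree: by the very definition of $S_{p+1}$ as the $(p+1)$-fold concatenation of $S_1$ with itself, we have
\begin{equation*}
S_{p+1} \;=\; S_p \,\cup\, (pe_1 + S_1),
\end{equation*}
the two pieces being glued along $\{p\} \times S^{2n-1}$. Hence to prove the proposition it suffices to choose the auxiliary data $(\phi_0^{(q)},\dots,\phi_{q-1}^{(q)}) \in \overline{K}_{q-1}(S_q)$ inductively so that, under the identification above,
\begin{equation*}
\phi_i^{(p+1)} \;=\; \phi_i^{(p)} \qquad \text{for } i=0,\dots,p-1.
\end{equation*}
This is straightforward: having already chosen $\phi_0^{(p)},\dots,\phi_{p-1}^{(p)}$ as disjoint embeddings of $H$ into the first $p$ summands of $S_p$ (together with their arcs to the boundary), we simply declare $\phi_i^{(p+1)}$ for $i<p$ to be these same embeddings (viewed in $S_p \subset S_{p+1}$), and take $\phi_p^{(p+1)}$ to be a new embedding of $H$ into the last summand $pe_1 + S_1$, chosen disjoint from the previous ones and with its $[0,1]\times D^{2n-1}$--part landing in a collar of $\{0\}\times S^{2n-1} \subset \partial S_{p+1}$. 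With such compatible choices, the two compositions give literally the same pair $(W', (\phi_0,\dots,\phi_{p-1}))$, and commutativity is immediate.

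The only genuine subtlety --- and the step I expect to require a moment's care --- is the bookkeeping of the boundary collar condition in Definition~\ref{defn:K-p}\eqref{item:5}: each $\phi_i$ must satisfy a specific standard form $\phi_i(s,p) = c(s,p + t_i e_1)$ near $\{0\}\times \R^{2n-1}$, and in particular the tails $[0,1]\times D^{2n-1}$ of all $\phi_i$ share the common collar at the boundary of $S_q$. The inductive choice therefore has to route the arc from the $p$-th summand of $S_{p+1}$ back to $\{0\}\times\R^{2n-1}$ past the previously chosen arcs, picking a value $t_p > t_{p-1}$ for the parameter; this is possible since $\R^{2n-1}$ has plenty of room. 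After this single remark the verification reduces, as above, to the identity $S_{p+1} = S_p \cup (pe_1 + S_1)$ and to the inductive compatibility of the chosen embeddings, which together yield the claimed commutativity.
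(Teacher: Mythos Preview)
Your proof is correct and follows essentially the same route as the paper: both arguments chase $W$ around the square, use the identity $S_{p+1} = S_p \cup (pe_1 + S_1)$, and observe that with a compatible choice of the $\phi_i$ the two compositions agree literally. You are somewhat more explicit than the paper about the need to choose the $\phi_i^{(q)}$ inductively so that the first $p$ of them in $S_{p+1}$ coincide with those in $S_p$ (and about the collar bookkeeping), whereas the paper simply writes $(\phi_0,\dots,\phi_{p-1})$ for both and treats the compatibility as implicit; this extra care is appropriate but does not constitute a different approach.
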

\begin{proof}
  Starting with $W \in \MM_{g-p-1}$ we map it right to $S \cup (e_1 +
  W) \in \MM_{g-p}$ and then down to the element of $X_p$ given by the
  manifold
  \begin{equation*}
    S_p \cup (pe_1 + (S \cup (e_1 + W))) = S_{p+1} \cup ((p+1)e_1 + W)
    \subset [0,p+1] \times \R^N
  \end{equation*}
  equipped with the embeddings $(\phi_0, \dots, \phi_{p-1})$.  If
  instead we map it down to $X_{p}$, we get the element with the same
  underlying manifold but equipped with the embeddings $(\phi_0,
  \dots, \phi_{p})$, and the face map $d_p:X_p \to X_{p-1}$ then
  forgets $\phi_p$.
\end{proof}
Finally, we want to show that all face maps $d_i: X_p \to X_{p-1}$ are
homotopic, $i = 0, \dots, p$.  For this, we need to be slightly more
precise about the choices of $(\phi_0, \dots, \phi_{p-1}) \in
\overline{K}_{p-1}(S_p)$ used in the map~\eqref{eq:5}.  Firstly, the inclusion
$S_p \to S_{p+1}$ induces a map $\overline{K}_\bullet(S_p) \to
\overline{K}_\bullet(S_{p+1})$ which we may assume sends the $\phi_i \in
\overline{K}_0(S_p)$ to the elements of $\overline{K}_0(S_{p+1})$ with the same names.
Secondly, we may assume that the coordinate patch $c_0: \R^{2n-1} \to
S^{2n-1}$ extends to an embedding $[0,1] \times \R^{2n-1} \to S$ whose
image is disjoint from the canonical embedding $W_{1,1} \to S$.  By
the argument in the proof of
Corollary~\ref{cor:GeometricTransitivity}, we may then pick a
diffeomorphism $\psi: S_2 \to S_2$ supported in the interior of the
complement of the embedded $[0,2] \times \R^{2n-1}$, which
interchanges the two canonical embeddings $W_{1,1} \to S_2$.  We now
pick $\phi_1 \in \overline{K}_0(S_2)$ in the same path component as $\psi \circ
\phi_0$.  (The embedding $\phi_1$ will automatically be equal to $\psi
\circ \phi_0$ when restricted to $W_{1,1} \subset H$ and we choose
the extension to the ``tether'' $[0,1] \times D^{2n-1} \subset H$ by
isotoping what $\psi \circ \phi_0$ does.)  More generally for $p \geq
2$ and $1 \leq i < p$ we let $\psi_{(i-1,i)}$ be the diffeomorphism of
$S_p \subset [0,p] \times \R^{2n}$ which acts as $\psi$ inside
$[i-1,i+1] \times \R^{2n}$ and is the identity outside.  We may then
inductively pick $\phi_p \in \overline{K}_0(S_{p+1})$ in the same path component
as $\psi_{(p-1,p)} \circ \phi_{p-1}$, and such that $\phi_p$ is
disjoint from the images of $\phi_i$ and the support of
$\psi_{(i-1,i)}$ for $i < p$.

\begin{proposition}\label{prop:face-maps-are-homotopic}
  For $0 \leq p \leq g-5$, 
  all face maps $d_i: X_p \to X_{p-1}$ are
  weakly homotopic to one another.
\end{proposition}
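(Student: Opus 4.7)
By Propositions~\ref{prop:htpy-type-X-p} and~\ref{prop:d-p-is-stabilisation}, the map $\alpha_p : \MM_{g-p-1} \to X_p$ from~\eqref{eq:5} is a weak equivalence and $d_p \circ \alpha_p = \alpha_{p-1} \circ s$, where $s$ is the stabilisation map. It therefore suffices to show that $d_{i-1} \circ \alpha_p$ is homotopic to $d_i \circ \alpha_p$ as maps $\MM_{g-p-1} \to X_{p-1}$ for each $1 \leq i \leq p$: chaining these homotopies and combining with Proposition~\ref{prop:d-p-is-stabilisation} identifies every $d_i \circ \alpha_p$ with $\alpha_{p-1} \circ s$ up to homotopy, and hence every $d_i$ with $d_p$ up to weak homotopy because $\alpha_p$ is a weak equivalence.

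For $W \in \MM_{g-p-1}$ write $M_W = S_{p+1} \cup ((p+1)e_1 + W)$. The two maps $d_{i-1}\circ\alpha_p(W)$ and $d_i\circ\alpha_p(W)$ share the same underlying submanifold $M_W$, and their embedding tuples differ only at position $i-1$: one has $\phi_i$, the other $\phi_{i-1}$. By the inductive choice of $\phi_i$, this embedding lies in the same path component of $\overline{K}_0(S_{p+1})$ as $\psi_{(i-1,i)}\circ\phi_{i-1}$, while the diffeomorphism $\psi_{(i-1,i)}$ of $S_{p+1}$ is supported away from both the boundary tube and from $\phi_j$ for $j\notin\{i-1,i\}$. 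Extending by the identity outside its support, and pulling back under any parametrisation $e : W_{g,1} \to M_W$, gives a diffeomorphism $\Psi \in \Diff^\partial(W_{g,1})$ fixing $\phi_j$ for $j\notin\{i-1,i\}$.

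I will produce the required homotopy as a concatenation of three moves in $X_{p-1}$, each chosen continuously in $W$: (i)~an isotopy in $\overline{K}_0$ carrying the $(i-1)$-st entry of the tuple from $\phi_i$ to $\Psi\circ\phi_{i-1}$, which exists by the path-component hypothesis and is supported inside $S_{p+1}$; (ii)~the orbit relation $[e,\phi] = [e\circ\Psi^{-1},\,\Psi\circ\phi]$ in $(\Emb^\partial \times \overline{K}_{p-1})/\Diff^\partial$, which replaces $\Psi\circ\phi_{i-1}$ by $\phi_{i-1}$ at the cost of changing the representing embedding $e$ to $e\circ\Psi^{-1}$; and (iii)~a path in $\Emb^\partial(W_{g,1},[0,\infty)\times\R^\infty)$ from $e\circ\Psi^{-1}$ back to $e$ (holding the tuple $\phi$ fixed). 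All three moves act trivially on the $(p+1)e_1+W$ part of $M_W$, so the resulting homotopy depends naturally on $W$.

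The main obstacle I anticipate is step~(iii). In general $\Psi$ is not isotopic to the identity --- its induced action on the quadratic module $\pi_n(M_W)$ permutes the hyperbolic summands corresponding to the $(i-1)$-st and $i$-th handles --- so at finite ambient codimension no such path in the embedding space exists. The argument relies crucially on passing to the limit $N = \infty$, where $\Emb^\partial(W_{g,1},[0,\infty)\times\R^\infty)$ is weakly contractible; this is exactly the setting in which the resolution $X_\bullet$ was defined. A standard obstruction-theoretic argument using this contractibility then produces a continuous $W$-parametrised family of paths as required, completing the homotopy between $d_{i-1}\circ\alpha_p$ and $d_i\circ\alpha_p$.
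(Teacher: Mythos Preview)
Your approach is correct and rests on the same idea as the paper's: reduce to showing $d_{i-1}\circ\alpha_p \simeq d_i\circ\alpha_p$, use the swapping diffeomorphism $\psi_{(i-1,i)}$ to relate $\phi_{i-1}$ and $\phi_i$, and then exploit the connectivity of embedding spaces in infinite codimension to absorb the effect of $\psi_{(i-1,i)}$.

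The paper's implementation of your crucial step~(iii) is more direct and avoids the quotient description entirely. Rather than choosing a parametrisation $e$ and invoking contractibility of $\Emb^\partial(W_{g,1},[0,\infty)\times\R^\infty)$, the paper works with the submanifold model: since for large $N$ the space of collared embeddings $S_{p+1} \hookrightarrow [0,p+1]\times\R^N$ is path-connected, there is a fixed isotopy $h_t$ of such embeddings from the inclusion $\iota$ to $\iota\circ\psi_{(i-1,i)}$, and the homotopy is written down explicitly as
\[
  W \longmapsto \bigl(h_t(S_{p+1}) \cup ((p+1)e_1 + W),\; h_t\circ\underline\phi\bigr).
\]
This is manifestly natural in $W$ because $h_t$ depends only on the $S_{p+1}$ part, so your appeal to obstruction theory becomes unnecessary. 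Your argument does work---the relevant extension problem is that of sections of a bundle over $\MM_{g-p-1}\times[0,1]$ with contractible fibre $\Emb^\partial(W_{g,1},\dots)$---but the sentence ``all three moves act trivially on the $(p+1)e_1+W$ part'' is not literally true for an \emph{arbitrary} path in step~(iii); the paper's $h_t$ is precisely a choice of such a path for which it is true. (There is also a harmless sign slip in your step~(ii): the orbit relation sends $[e,\Psi\circ\tilde\phi_{i-1}]$ to $[e\circ\Psi,\tilde\phi_{i-1}]$, so the embedding becomes $e\circ\Psi$ rather than $e\circ\Psi^{-1}$.)
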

\begin{proof}
  Let us focus on the case $p=1$, the general case being similar.  If
  for $i = 0,1$ we write $f_i$ for the composition of $d_i: X_1 \to
  X_0$ with the weak equivalence $\MM_{g-2} \to X_1$ from
  Proposition~\ref{prop:htpy-type-X-p}, we shall construct a homotopy
  $f_0 \simeq f_1: \MM_{g-2} \to X_0$.  These maps are given by the
  formula
  \begin{equation*}
    f_i(W) = (S_2 \cup (2e_1 + W), \phi_i).
  \end{equation*}
  Now the composition of the inclusion $i: S_2 \to [0,2] \times \R^N$
  with the diffeomorphism $\psi: S_2 \to S_2$ is an embedding which
  agrees with $i$ near $\partial S_2$.  For large $N$ the space of
  such embeddings is path connected, so we may find an isotopy of
  embeddings $h_t: S_2 \to [0,2] \times \R^N$ from $i$ to $i \circ
  \psi$, which restricts to the constant isotopy of embeddings of a
  neighbourhood of $\partial S_2$.  Then
  \begin{equation*}
    W \longmapsto (h_t(S_2) \cup (2e_1 + W), h_t \circ \phi_1).
  \end{equation*}
  gives a homotopy of maps $\MM_{g-2} \to X_0$ which starts at $f_1$
  and ends at the map $h_1: W \mapsto (S_2 \cup (2e_1 + W), \psi \circ
  \phi_1)$, but since $\psi \circ \phi_1 \in \overline{K}_0(W_{2,2})$ is in the
  same path component as $\phi_0$, the map $h_1$ is clearly homotopic
  to $f_0$.
\end{proof}

\subsection{The spectral sequence and homological stability}
\label{sec:spectr-sequ-homol}

We now prove Theorem~\ref{thm:main} by induction, using the augmented
simplicial space $X_\bullet$.  The induction hypothesis implies that
for $p > 0$ the face maps $d_i : X_p \to X_{p-1}$ induce homology
isomorphisms in a range, and the induction step will consist of
proving that the augmentation $X_0 \to X_{-1}$ induces a homology
isomorphism in a range which is one larger than the range for $X_2 \to
X_1$.  To prove this, we consider the spectral sequence induced by the
augmented simplicial space $X_\bullet$, with $E^1$ term $E^1_{p,q} =
H_q(X_p)$ for $p \geq -1$ and $q \geq 0$.  The differential is given
by $d^1 = \sum (-1)^i (d_i)_*$, and the group $E^\infty_{p,q}$ is a
subquotient of the relative homology $H_{p+q+1}(X_{-1},
|X_\bullet|)$.
\begin{lemma}\label{lem:form-of-SS}
  We have isomorphisms $E^1_{p,q} \cong H_q(\MM_{g-p-1})$ for $-1 \leq p
  \leq g-5$, with respect to which the differential
  \begin{equation*}
    H_q(\MM_{g-p-1}) \cong E^1_{p,q} \overset{d^1}{\lra} E^1_{p-1,q} \cong
    H_q(\MM_{g-p})
  \end{equation*}
  agrees with the stabilisation map for $p$ even, and is zero
  otherwise.  Furthermore, $E^\infty_{p,q} = 0$ for $p + q \leq
  \lfloor (g-5)/2\rfloor$.
\end{lemma}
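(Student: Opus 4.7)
The proof will assemble the three claims from pieces already established. First I would identify the $E^1$ term: by Proposition~\ref{prop:htpy-type-X-p}, for $g-p-1 \geq 4$ (equivalently $p \leq g-5$) the map~\eqref{eq:5} gives a weak equivalence $\MM_{g-p-1} \simeq X_p$, and the augmentation term $X_{-1} = \MM_g$ falls under the same formula. Applying $H_q(-)$ yields $E^1_{p,q} \cong H_q(\MM_{g-p-1})$ in the stated range.

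Next I would compute $d^1$. The simplicial differential $d^1\colon E^1_{p,q} \to E^1_{p-1,q}$ is the alternating sum $\sum_{i=0}^p (-1)^i (d_i)_*$, and by Proposition~\ref{prop:face-maps-are-homotopic} every $(d_i)_*$ agrees with $(d_p)_*$. Therefore
\begin{equation*}
  d^1 \;=\; \Bigl(\sum_{i=0}^p (-1)^i\Bigr)(d_p)_*
\end{equation*}
which is $(d_p)_*$ when $p$ is even and $0$ when $p$ is odd. By Proposition~\ref{prop:d-p-is-stabilisation}, the map $(d_p)_*$ is identified, through the weak equivalences of Proposition~\ref{prop:htpy-type-X-p}, with the stabilisation map $H_q(\MM_{g-p-1}) \to H_q(\MM_{g-p})$. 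This gives the description of the differential.

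Finally I would read off the vanishing of $E^\infty$. By Proposition~\ref{prop:resolution}, the map $|X_\bullet| \to \MM_g = X_{-1}$ is $\lfloor (g-3)/2 \rfloor$-connected, so the relative group $H_{p+q+1}(X_{-1},|X_\bullet|)$ vanishes whenever $p+q+1 \leq \lfloor (g-3)/2 \rfloor$, i.e.\ whenever $p+q \leq \lfloor (g-5)/2 \rfloor$. Since $E^\infty_{p,q}$ is a subquotient of this relative homology, it vanishes in the same range.

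I do not expect a genuine obstacle here: the three assertions are formal consequences of results already proved, and the only subtle bookkeeping is the interplay between the indexing $p \leq g-5$ required for Proposition~\ref{prop:htpy-type-X-p} and the range for Proposition~\ref{prop:face-maps-are-homotopic}, which is exactly where the hypothesis $p \leq g-5$ in the lemma statement comes from. The mild point to verify carefully is that the identification $X_p \simeq \MM_{g-p-1}$ of Proposition~\ref{prop:htpy-type-X-p} intertwines $d_p$ with stabilisation (not merely some face map), which is the content of Proposition~\ref{prop:d-p-is-stabilisation}; together with Proposition~\ref{prop:face-maps-are-homotopic} this then legitimately replaces each $(d_i)_*$ with the stabilisation map on homology.
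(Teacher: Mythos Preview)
Your proposal is correct and follows essentially the same approach as the paper: identify $E^1_{p,q}$ via Proposition~\ref{prop:htpy-type-X-p}, use Proposition~\ref{prop:face-maps-are-homotopic} to make all $(d_i)_*$ equal so that the alternating sum collapses, invoke Proposition~\ref{prop:d-p-is-stabilisation} to identify the surviving term with the stabilisation map, and deduce the $E^\infty$ vanishing from Proposition~\ref{prop:resolution}. Your additional remarks on the bookkeeping are accurate and the argument is complete.
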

\begin{proof}
  Proposition~\ref{prop:htpy-type-X-p} identifies $E^1_{p,q} =
  H_q(X_p) \cong H_q(\MM_{g-p-1})$ and
  Proposition~\ref{prop:face-maps-are-homotopic} shows that all maps
  $(d_i)_*: H_q(X_p) \to H_q(X_{p-1})$ are equal, $i = 0, \dots, p$.
  Therefore all terms in the differential $d^1 = \sum (-1)^i (d_i)_*$
  cancel for $p$ odd, and for $p$ even the term $(d_p)_*$ survives and
  by Proposition~\ref{prop:d-p-is-stabilisation} is identified with
  the stabilisation map.

  The group $E^\infty_{p,q}$ is a subquotient of the relative homology
  $H_{p+q+1}(X_{-1}, |X_\bullet|)$, but this vanishes for $p+q+1 \leq
  \lfloor (g-3)/2 \rfloor$ since the map $|X_\bullet| \to X_{-1}$ is
  $\lfloor (g-3)/2\rfloor$-connected by
  Proposition~\ref{prop:resolution}.
\end{proof}
\begin{proof}[Proof of Theorem~\ref{thm:main}]
  Let us write $a = \lfloor (g-5)/2\rfloor$.  We will use the spectral
  sequence above to prove that $H_q(\MM_{g-1}) \to H_q(\MM_g)$ is an
  isomorphism for $q \leq a$, assuming we know inductively that for $j
  > 0$ the stabilisation maps $H_q(\MM_{g-2j-1}) \to H_q(\MM_{g-2j})$
  are isomorphisms for $q \leq a-j$.  By Lemma~\ref{lem:form-of-SS},
  this implies that the differential $d^1: E^1_{2j,q} \to
  E^1_{2j-1,q}$ is an isomorphism for $0 < j \leq a-q$, and hence that
  $E^2_{p,q} = 0$ for $0 < p \leq 2(a-q)$.  In particular, the
  $E^2_{p,q}$ term vanishes in the region given by $p \geq 1$, $q \leq
  a -1$ and $p + q \leq a+1$, and thus for $r \geq 2$ and $q \leq a$
  it follows that differentials into $E^r_{-1,q}$ and $E^r_{0,q}$
  vanish.  We deduce that for $q \leq a$ we have
  \begin{align*}
    E^\infty_{0,q} = E^2_{0,q} &= \Ker(H_q(\MM_{g-1}) \to
    H_q(\MM_g))\\
    E^\infty_{-1,q} = E^2_{-1,q} &= \Coker(H_{q}(\MM_{g-1}) \to
    H_{q}(\MM_g)),
  \end{align*}
  and since the group $E^\infty_{p,q}$ vanishes for $p+q\leq a$ we see
  that the stabilisation map $H_q(\MM_{g-1}) \to H_q(\MM_g)$ has
  vanishing kernel and cokernel for $q \leq a$, establishing the
  induction step.  The statement is vacuous for $g = 1$ and $g=2$, which starts the induction.
\end{proof}
\begin{remark} A similar argument shows that $H_q(\MM_{g-1}) \to
  H_q(\MM_g)$ is surjective for $q = \lfloor (g-3)/2\rfloor$, but this
  requires the stronger version of
  Corollary~\ref{cor:GeometricCancellation} from~\cite{Kreck}.  Thus
  the map in Theorem~\ref{thm:main} is surjective for $g \geq 2k + 2$.
\end{remark}
\begin{remark} In the cases $n=3$ and $n=7$, the quadratic module
  $(\pi_n(W_{g,1}), \lambda, \alpha)$ is just $\bZ^{2g}$ with its
  standard symplectic form, as the quadratic form $\alpha$ takes
  values in the trivial group.  In this case, $|K^a(\pi_n(W_{g,1}),
  \lambda, \alpha)|$ is $\lfloor (g-3)/2\rfloor$-connected by
  \cite[Theorem 4.1]{MR2795243}.  Using this improvement of
  Theorem~\ref{thm:Charney}, the same argument shows that the map in
  Theorem~\ref{thm:main} is an isomorphism for $g \geq 2k+2$ (and
  surjective for $g \geq 2k$) when $n$ is 3 or 7.
\end{remark}


\bibliographystyle{amsalpha}
\bibliography{biblio}

\end{document}